\theoremstyle{plain}
\newtheorem{theorem}{Theorem}[section]
\newtheorem{proposition}[theorem]{Proposition}
\newtheorem{Problem}[theorem]{Problem}
\theoremstyle{definition}
\theoremstyle{remark}
\DeclareMathAlphabet{\mathpzc}{OT1}{pzc}{m}{it}
\DeclareSymbolFont{cyrletters}{OT2}{wncyr}{m}{n}
\DeclareMathSymbol{\Sha}{\mathalpha}{cyrletters}{"58}
\begin{document}
\thispagestyle{empty}

\newcommand\ddfrac[2]{\dfrac{\displaystyle #1}{\displaystyle #2}}

\newcommand{\ZZ}{{\mathbb Z}}
\newcommand{\Z}{{\mathbb Z}}
\newcommand{\Q}{\mathbb Q}
\newcommand{\FF}{{\mathbb F}}
\newcommand{\E}{{\mathcal{E}}}
\newcommand{\QQ}{\mathbb Q}
\newcommand{\mcF}{{\mathcal{F}}}
\newcommand{\A}{\textrm{A}}
\newcommand{\G}{\mathcal{G}}
\newcommand{\D}{\rm D}
\newcommand{\loc}{\textrm{loc}}
\newcommand{\chara}{\rm char}

\newcommand{\ds}{\displaystyle}
\newcommand{\la}{\langle}
\newcommand{\ra}{\rangle}
\newcommand{\z}{{\zeta}}
\newcommand{\ov}{\overline}
\newcommand{\wt}{\widetilde}
\newcommand{\Or}{\mathcal{O}}
\newcommand{\X}{\mathcal{X}}
\newcommand{\Hil}{\mathcal{H}}
\newcommand{\Ab}{\mathcal{A}}
\newcommand{\End}{\textrm{End}}
\newcommand{\Bi}{\mathfrak{B}}

\newcommand{\Gal}{{\rm Gal}}
\newcommand{\Id}{\rm Id}
\newcommand{\GL}{{\rm GL}}
\newcommand{\SL}{{\rm SL}}
\newcommand{\Dic}{{\rm Dic}}

\newcommand{\s}{\sigma}
\renewcommand{\a}{\alpha}
\renewcommand{\b}{\beta}
\renewcommand{\c}{\gamma}
\renewcommand{\d}{\delta}
\renewcommand{\leq}{\leqslant}
\renewcommand{\geq}{\geqslant}
\newcommand{\pf}{\textbf{Proof.}}
\newcommand{\eop}{\begin{flushright}$\square$\end{flushright}}
\newcommand{\modn}{{\textrm{mod} \hspace{0.1cm} }}

\title{On $7$-division fields of CM elliptic curves}
\author{Jessica Alessandr\`i and Laura Paladino\thanks{Corresponding author}}
\date{}

\maketitle

\renewcommand{\thefootnote}{\arabic{footnote}}
\setcounter{footnote}{0}

\begin{abstract}
	Let $\E$ be a CM elliptic curve defined over a number field $K$,
	with Weiestrass form $y^3=x^3+bx$ or $y^2=x^3+c$.
	For every positive integer $m$,
	we denote by $\E[m]$ the $m$-torsion subgroup of $\E$ and
	by $K_m:=K(\E[m])$ the $m$-th division field, i.e.\ the extension of $K$ generated by the coordinates
	of the points in $\E[m]$.  We classify all fields $K_7$. In particular we give explicit generators for $K_7/K$ and
produce all Galois groups $\Gal(K_7/K)$. We also show some applications to the Local-Global Divisibility
	Problem and to modular curves. 
\end{abstract}

\medskip
\textbf{Keywords}: elliptic curves, complex multiplication, torsion points.
\medskip

\textbf{Mathematics~Subject~Classification~(2010)}: 11G05, 11F80.

\section{Introduction} \label{sec1}
Let $K$ be a number field with algebraic closure $\ov{K}$ and let $\E$ be an elliptic curve defined over $K$. We keep the standard notation $\E[m]$ for the $m$-torsion subgroup of $\E$ and by $K_m$ we denote the $m$-th division field $K(\E[m])$, i.e.\ the field obtained by adding to $K$ the coordinates of the points in $\E[m]$.  Since the beginning of the studies on elliptic curves, the $m$-th division fields have played a key r\^ole. 
The properties of $K_m/K$ are related to Galois representations on the total Tate module, to Iwasawa theory, to modularity and  to the proof of the Mordell-Weil theorem.
The extension $K_m/K$ is a Galois extension, in fact it is the splitting field of the $m$-th division polynomial, i.e.\ the
polynomial whose roots are the abscissas of the $m$-torsion points of $\E$, and the polynomials whose
roots are the ordinates corresponding to those abscissas.  The extension $K_m/K$ is monogeneous by Artin's primitive element theorem, however, in general it is not easy to find an explicit single generator. It is also well-known that
$\E[m]\simeq (\Z/m\Z)^2$. Therefore, if $\{P_1\,,P_2\}$ is a generating set for $\E[m]$, with $P_i=(x_i,y_i)$, for $i=1,2$, then $K_m=K(x_1,x_2,y_1,y_2)$ and $\{x_1,x_2,y_1,y_2\}$ is the generating set for $K(\E[m])$ that is usually adopted.
We are interested in showing explicit generators for this extensions, searching for generating
sets as easy as possible to be used in applications. Indeed there are many potential applications, for instance in
Galois representations (see for example \cite{Sha}),  local-global problems on elliptic curves (see Subsection \ref{loc-glob}),
descent problems (see for example \cite{SS} and \cite{Ba} among others),
points on modular curves and points on Shimura curves. 
\par In the previous papers of this series \cite{BP},  \cite{BP2}, \cite{Pal_2018}, some of those applications have been showed,
as well as some new generating sets involving a primitive $m$-th root of the unity $\z_m$.
In fact, by the properties of the Weil pairing $e_m$, the image $\z_m:=e_m(P_1,P_2)\in K_m$ is a primitive
$m$-th root of unity  and $K(\z_m)\subseteq K_m$ (see for instance \cite{Sil}). It turned out that
$\z_m$ can be used as a generator for $K_m/K$ and in particular, when $m$ is odd, we have 
$ K_m=K(x_1,\zeta_m,y_2)$ \cite[Theorem 1.1]{BP2}.
\noindent When $m=p$ is a prime number, this generating set 
is minimal among the subsets of $\{x_1,x_2,\z_m,y_1,y_2 \}$ (for further details
see \cite{BP2}). On the contrary, in the case when $m=p^n$, with $n\geq 2$,
we can replace $\z_{p^n}$ with
$\z_p$, i.e.  $K_{p^n}=K(x_1,\zeta_p,y_2)$, for every $n\geq 1$ (see \cite[Theorem 1.1]{DP2}).
\par Observe that
when $K=\QQ$, we have $\z_m\notin K$ and therefore $\QQ(\E[m])\neq \QQ$, for every $m\geq 3$. 
In particular the extension $\QQ(\E[m])/\QQ$ is "as minimal as possible" when $\QQ(\E[m])=\QQ(\z_m)$.
L. Merel and M. Rebolledo proved that if such an equality holds when $m=p$ is a prime, then $p\leq 5$ (see \cite{Mer} and \cite{Reb}).
A classification of all elliptic curves such that  $\QQ(\E[3])=\QQ(\z_3)$ is given in \cite{Pal2} and a classification
of all elliptic curves such that  $\QQ(\E[5])=\QQ(\z_5)$ is given in \cite{GJ}. In this last paper E. Gonz\'alez Jim\'enez and
\`A. Lozano-Robledo also investigate the cases when 
$\QQ(\E[m])/\QQ$ is an abelian extension for all elliptic curves over number fields.
Among other important results, in particular they prove that if $\E$ is a CM elliptic curve and 
$\QQ(\E[m])/\QQ$ is abelian, then $m\in \{2,3,4,5,6,8\}$. 
\par For $m=3$ and $m=4$ there are explicit descriptions of all
possible fields $K_3$ and $K_4$, in terms of generators, degrees and
Galois groups \cite{BP2}, \cite{BP}. In \cite{Pal_2018} there is a similar description of the fields $K_5$ for the families of CM elliptic curves
$ \mcF_1 : y^2=x^3+bx $, with $b\in K$ and  
$\mcF_2 : y^2=x^3+c$, with  $c\in K $.
\par Here we give a classification of every possible field $K_7$, for the curves of the same families $ \mcF_1$
and $ \mcF_2$,
showing in particular explicit generators for the extension $K_7/K$. We also show all possible Galois groups $\Gal(K_7/K)$
for the curves in $ \mcF_1$
and $ \mcF_2$.
\par The paper is structured as follows. In the first part of it we describe generators, possible degrees and possible Galois groups for the curves
of the family $\mcF_1$. Then we give a similar description for the curves of the family $\mcF_2$. In the last part of the paper
we show some applications of these results. In particular we produce an application to the Local-Global Divisibility Problem, which was stated in \cite{DZ1} by R. Dvornicich and U. Zannier in 2001
(see Subsection \ref{loc-glob} for further details).  In addition we deduce some properties concerning points on modular curves.

\section{Generators of $K(\E[7])$ for elliptic curves $y^2=x^3+bx$}  \label{sec_gen_1}
For every positive integer $m$, the $m$-th division polynomial of an elliptic curve $\E$ is the polynomial whose roots are
the abscissas of the $m$-torsion points of $\E$. It is generally denoted by 
$\Psi_m(x)$. The polynomial
$\Psi_m$ has degree $\dfrac{m^2-1}{2}$ when $m$ is odd and $\dfrac{m^2-4}{2}$ when $m$ is even.
Let ${\mathcal{E}}_1$ be an elliptic curve defined over $K$, with Weierstrass form $y^2=x^3+bx$.
We will denote by $\phi_1$ the complex multiplication of $\E_1$, i.e. $\phi_1((x,y))=(-x,iy)$, for every point $P=(x,y)\in \E_1$.
Since $\phi_1$ is an automorphism of $\E_1$, then $\Psi_m$ is a polynomial
in $x^2$. When $m=7$, the $7$-th division polynomial of ${\mathcal{E}}_1$ is the polynomial

\[
\begin{split} q_7(x):=&7x^{24}+308bx^{22}-2954b^2x^{20}-19852b^3x^{18}-35231b^4x^{16}-82264b^{5}x^{14}-111916b^6x^{12}\\
	&-42168b^7x^{10}+15673b^8x^8+14756b^9x^6+1302b^{10}x^4+196b^{11}x^2-b^{12}.\\
\end{split}
\]

\bigskip\noindent We can set $t:=x^2$ and consider the polynomial $q_7(t)$ of degree 12
to look for the abscissas of the $7$-torsion points of $\E_1$. For every $\alpha\in K_7$, we denote by
$\overline{\alpha}$ its complex conjugate.
Let $i$ be a root of $x^2+1=0$, let $\sigma_1$ be the automorphism of the extension $\QQ(\z_7,i)/\QQ$ mapping $\z_7$ to
$\z_7^5$ and let

\[
\begin{split}
&\omega_1:=(6i + 4)\z_7^5  + (6i - 2)\z_7^4  + (2i - 2)\z_7^3  + (2i + 4)\z_7^2  + 8i \z_7 + 4i- 3;\\
& \\
&\omega_3:=\sigma_1(\omega_1)=(2i + 2)\z_7^5  + 6\z_7^4  + (- 4i + 6)\z_7^3  + (- 6i + 2)\z_7^2  - 4i \z_7 - 2i - 1;\\
& \\
&\omega_5:=\sigma_1(\omega_3)=(4i - 6)\z_7^5  + (- 2i - 4)\z_7^4  + (6i - 4)\z_7^3  - 6\z_7^2  + 4i \z_7 + 2i - 7;\\
& \\
& \omega_{s+1}:= \overline{\omega_{s}}, \hspace{0.3cm} \textrm{for} \hspace{0.1cm} s\in \{1,3,5\};\\
\end{split}
\]
\bigskip
\[
\begin{array}{ll}
&\theta_1 :=\dfrac{1}{7}\big(( - 3520i - 1568)\z_7^5  + (- 4800i + 2352)\z_7^4+(- 256i + 2352)\z_7^3 \\
& \hspace{0.9cm}+ (- 1536i - 1568)\z_7^2  - 5056i \z_7-2528i+3584\big);\\
&\\
&\theta_3 := \sigma_1(\theta_1)=\dfrac{1}{7}\big(( - 256i - 2352)\z_7^5  + (1280i - 3920)\z_7^4+(3264i - 3920)\z_7^3\\
& \hspace{2.2cm} + (4800i - 2352)\z_7^2  + 4544i\z_7+2272i+1232\big);\\
&\\
&\theta_5 :=\sigma_1(\theta_3)= \dfrac{1}{7}\big(( - 3264i + 3920)\z_7^5+ (1536i + 1568)\z_7^4+(- 3520i + 1568)\z_7^3\\
&  \hspace{2.2cm} + (1280i + 3920)\z_7^2- 1984i\z_7-992i+5152\big);\\
&\\
& \theta_{s+1}:= \overline{\theta_{s}}, \hspace{0.3cm} \textrm{for} \hspace{0.1cm}  s\in \{1,3,5\}.\\
\end{array}
\]




\bigskip\noindent 
With the use of a software of computational algebra
(we used AXIOM, that is also implemented in SAGE), one can verify that $q_7(t)$ factors over $K(i,\z_7)$ as follows:

$$ q_7(t):= 7\prod_{j=1}^6   \left(t-\left(\omega_jb+\dfrac{1}{2}b\sqrt{\theta_j}\right) \right) \left(t-\left(\omega_jb-\dfrac{1}{2}b\sqrt{\theta_j}\right) \right). $$

\noindent Thus the roots of $q_7(x)$, i.e. the abscissas of the $7$-torsion points of $\E_1$, are

$$ x_{2j-1}=\sqrt{\left(\omega_jb+\dfrac{1}{2}b\sqrt{\theta_j}\right)}; \hspace{0.3cm} \hspace{0.3cm}  x_{2j}=\sqrt{\left(\omega_jb-\dfrac{1}{2}b\sqrt{\theta_j}\right)};$$

$$-x_{2j-1}=-\sqrt{\left(\omega_jb+\dfrac{1}{2}b\sqrt{\theta_j}\right)}; \hspace{0.3cm} \hspace{0.3cm}- x_{2j}=-\sqrt{\left(\omega_jb-\dfrac{1}{2}b\sqrt{\theta_j}\right)};$$

\noindent for $1\leq j\leq 6$. By using the equation $y^2=x^3+bx$, we can calculate the corresponding ordinates. 
For ease of notation, we will denote by $iP$ the point $\phi_1(P)=(-x,iy)$, where $P=(x,y)\in \E_1$.
It turns out that the 48 points of exact order 7 of $\E_1$ are the following:

\[
\begin{split}
	\pm P_{2j-1}&:=(x_{2j-1},\pm y_{2j-1})=\left(\sqrt{\omega_jb+\dfrac{1}{2}b\sqrt{\theta_j}}, \pm \sqrt{\left(\omega_j+\dfrac{1}{2}\sqrt{\theta_j}+1\right)b\sqrt{\omega_jb+\dfrac{1}{2}b\sqrt{\theta_j}}}\right); \\ 
\end{split} \]

\[
\begin{split}
	\pm P_{2j}&:=(x_{2j},\pm y_{2j})=\left(\sqrt{\omega_jb-\dfrac{1}{2}b\sqrt{\theta_j}}, \pm \sqrt{\left(\omega_j-\dfrac{1}{2}\sqrt{\theta_j}+1\right)b\sqrt{\omega_jb-\dfrac{1}{2}b\sqrt{\theta_j}}}\right); \\ 
\end{split} \]

$$\hspace{0.5cm} \pm iP_{2j-1}:=(-x_{2j-1},\pm iy_{2j-1}), \hspace{0.3cm} \textrm{ and } \hspace{0.3cm} \pm iP_{2j}:=(-x_{2j},\pm iy_{2j});$$

\noindent  for  $1\leq j\leq 6.$

\bigskip\begin{theorem} \label{gen2} Let  $\theta_j$ and $\omega_j$ be as above, for $j=1, ..., 6$ and let
	$\varepsilon\in \{+,-\}$ fixed. 
	Then \vskip 0.3cm \centerline{$K_7=K(i,\z_7,y_j)=K\left(i, \zeta_7,\sqrt{\left(\omega_j+\varepsilon\dfrac{1}{2}\sqrt{\theta_j}+1\right)b\sqrt{\omega_jb+\varepsilon\dfrac{1}{2}b\sqrt{\theta_j}}}\right).$}
\end{theorem}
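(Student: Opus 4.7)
The plan is to establish the two inclusions $K(i,\z_7,y_j)\subseteq K_7$ and $K_7\subseteq K(i,\z_7,y_j)$ separately.

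For the inclusion $K(i,\z_7,y_j)\subseteq K_7$, note that $y_j\in K_7$ by construction, while $\z_7\in K_7$ by the Weil pairing, as recalled in the introduction. To obtain $i\in K_7$, apply the complex multiplication $\phi_1$ to $P_j$: since $\phi_1(P_j)=iP_j=(-x_j,iy_j)\in\E_1[7]$, the coordinate $iy_j$ lies in $K_7$, and then $i=iy_j/y_j\in K_7$.

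For the reverse inclusion, set $L:=K(i,\z_7,y_j)$. The crucial step is to recover $x_j$ in $L$. Observe first that $\omega_j,\theta_j\in\QQ(i,\z_7)\subseteq L$ by their explicit expressions. From the defining identities
\[
y_j^2=\left(\omega_j+\varepsilon\tfrac{1}{2}\sqrt{\theta_j}+1\right)b\,x_j,\qquad x_j^2=\left(\omega_j+\varepsilon\tfrac{1}{2}\sqrt{\theta_j}\right)b,
\]
squaring the first identity and substituting the second gives
\[
y_j^4=b^3\left(\omega_j+\varepsilon\tfrac{1}{2}\sqrt{\theta_j}\right)\left(\omega_j+\varepsilon\tfrac{1}{2}\sqrt{\theta_j}+1\right)^2,
\]
which expands to an expression of the form $y_j^4=b^3(C_j+\varepsilon D_j\sqrt{\theta_j})$ with explicit $C_j,D_j\in\QQ(i,\z_7)$. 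After checking (symbolically, for each $j=1,\ldots,6$) that $D_j\neq 0$, one recovers $\sqrt{\theta_j}=\varepsilon(y_j^4/b^3-C_j)/D_j\in L$, and hence $x_j=y_j^2/[(\omega_j+\varepsilon\tfrac{1}{2}\sqrt{\theta_j}+1)b]\in L$. Therefore $P_j=(x_j,y_j)\in\E_1(L)$.

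Finally, applying $\phi_1$ once more yields $iP_j=(-x_j,iy_j)\in\E_1(L)$. Because $7\equiv 3\pmod 4$ is inert in $\Z[i]$, the module $\E_1[7]$ is one-dimensional over $\FF_{49}\cong\Z[i]/7\Z[i]$, so any nonzero point generates it as a $\Z[i]$-module; equivalently, $\{P_j,iP_j\}$ is a $\Z/7\Z$-basis of $\E_1[7]$. Every 7-torsion point is then a $\Z$-linear combination of $P_j$ and $iP_j$, so its coordinates are produced in $L$ by the group law (whose only parameter is $b\in K$). This forces $K_7\subseteq L$. The main technical obstacle is the verification that $D_j\neq 0$ for each of the six values of $j$, an explicit but somewhat lengthy calculation with the given expressions for $\omega_j$ and $\theta_j$, naturally delegated to a computer algebra system as was done for the factorization of $q_7$.
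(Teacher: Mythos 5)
Your proof is correct and follows essentially the same route as the paper's: both arguments rest on the facts that $\{P_j,\phi_1(P_j)\}$ is a basis of $\E_1[7]$ because $7$ is inert in $\Z[i]$, and that $\sqrt{\theta_j}$ (and hence $x_j$) can be recovered inside $K(i,\z_7,y_j)$ from the explicit expression for $y_j^4$. The only divergence is that where the paper invokes \cite[Theorem 1.1]{BP2} to write $K_7=K(x_j,\z_7,iy_j)$, you substitute a direct group-law rationality argument showing that once the basis points have coordinates in $L$ all of $\E_1[7]$ does; this is a self-contained replacement that changes nothing essential.
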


\begin{proof}
	If $P$ is a
	nontrivial $7$-torsion point, then $\phi_1(P)$ is a $7$-torsion point too. If $\phi_1(P)$ is not a multiple of $P$,
	then a basis for $\E_1[7]$ is given by $\{P,\phi_1(P)\}$. Observe that $\phi_1(P)=nP$ if and only if $(i-n)P=O$. 
	Since the ring of automorphisms of $\E_1$ is $\mathbb{Z}[i]$ and $7$ is inert in $\mathbb{Z}[i]$, 
then $\phi_1(P)$ is not a multiple of $P$, for every $P\in \E_1[7]$ of exact order 7, and we can choose $\{P_j,\phi_1(P_j)\}$ as a generating set of $\E_1[7]$, for any $j=1,\dots,12$. We have $K_7=K(x_j,y_j,-x_j,iy_j)=K(x_j,y_j,i)$.
	On the other hand, by \cite[Theorem 1.1]{BP2}, the field $K_7$ is equal to $K(x_j,\z_7, iy_j)$.
	Then in particular $K_7=K(x_j,i,\z_7,y_j)$. By calculating $y_j^2$ and $y_j^4$, one can verify that $\sqrt{\theta_j} \in K(i,\zeta_7,y_j^4)$.
	Thus $x_j\in K(i,\z_7,y_j)$ and we get the conclusion
	$$K_7=K(i,\z_7,y_j)=K\left(i, \zeta_7,\sqrt{\left(\omega_j+\varepsilon\dfrac{1}{2}\sqrt{\theta_j}+1\right)b\sqrt{\omega_jb+\varepsilon\dfrac{1}{2}b\sqrt{\theta_j}}}\right),$$

\noindent for every $j=1, ..., 6$ and $\varepsilon\in \{+,-\}$.
\end{proof}

\section{Degrees $[K_7:K]$ for the curves of $\mcF_1$} \label{sub3}

For ease of notation, from now on we will fix the generating set $\{P_1,\phi_1(P_1)\}$ for
$\E_1[7]$. By Theorem \ref{gen2} we have 
$K_7=K\left(i, \zeta_7,\sqrt{\left(\omega_1+\dfrac{1}{2}\sqrt{\theta_1}+1\right)b\sqrt{\omega_1b+\dfrac{1}{2}b\sqrt{\theta_1}}}\right)$.
As explained in the proof of Theorem \ref{gen2}, such a choice is without loss of generality and all the results
that we are going to show about the degree $[K_7:K]$ and the Galois group ${\Gal}(K_7/K)$ hold as
well for every other generating set of the extension $K_7/K$ listed in Theorem \ref{gen2}.

\begin{theorem} \label{deg1}
	Let $\E_1: y^2=x^3+bx$, with $b\in K$. 
	Let $y_1=\sqrt{\left(\omega_1+\dfrac{1}{2}\sqrt{\theta_1}+1\right)b\sqrt{\omega_1b+\dfrac{1}{2}b\sqrt{\theta_1}}}$ and consider the conditions
	
	\bigskip
	
	\begin{tabular}{llll}
		& {\bf \A.} \hspace{0.3cm} & $i\notin K$; \hspace{1.5cm}  & {\bf C.} \hspace{0.3cm} $\sqrt{\theta_1}\notin K(i,\z_7)$; \\
		& {\bf B1.} \hspace{0.3cm} & $\z_7+\z_7^{-1}\notin K(i)$;  & {\bf D.} \hspace{0.3cm} $\sqrt{\omega_1b+\dfrac{1}{2}b\sqrt{\theta_1}}\notin K(i,\z_7,\sqrt{\theta_1})$; \\
		& {\bf B2.} \hspace{0.3cm} & $\z_7\notin K(i,\z_7+\z_7^{-1})$; & {\bf E.} \hspace{0.3cm} $y_1 \notin K\left(i, \zeta_7, \sqrt{\omega_1b+\dfrac{1}{2}b\sqrt{\theta_1}}\right)$. \\
	\end{tabular}
	
	\bigskip
	
	The possible degrees of the extension $K_7/K$ are the following
	
	\begin{center}
		\begin{tabular}{|c|c|c|c|}
			\hline
			
			$d$ & {\em holding conditions}  &  $d$ & {\em holding conditions}  \\
			\hline
			{\em 96} &  {\bf A}, {\bf B1},  {\bf B2}, {\bf C}, {\bf D}, {\bf E} &  {\em 8} & {\bf E} \textrm{and one among } {\bf A}, {\bf B2}, {\bf C}, {\bf D}
			\\
			\hline
			{\em 48} &  {\bf B1}, {\bf E}, \textrm{and three among } {\bf A}, {\bf B2}, {\bf C}, {\bf D} & {\em 6} &  
			\begin{minipage}[t]{6cm}\begin{center} {\bf B1} {and one among } {\bf A}, {\bf B2}, {\bf E} \end{center}\end{minipage}\\
			\hline
		\begin{minipage}[t]{0.4cm}	\begin{center}\vskip 0.003cm	{\em 32} \end{center}\end{minipage}&
			\begin{minipage}[t]{6cm}	\begin{center}\vskip 0.0001cm	{\bf A},  {\bf B2}, {\bf C}, {\bf D}, {\bf E} \end{center}\end{minipage}  &  \begin{minipage}[t]{0.4cm}	\begin{center}\vskip 0.003cm{\em 4} \end{center}\end{minipage} &
			\begin{minipage}[t]{6cm}\begin{center} {\bf A}, {\bf B2} \textrm{ or } \\
					{\bf E} \textrm{and one among } {\bf A}, {\bf B2}, {\bf C}, {\bf D} \end{center}\end{minipage}\\
			\hline
		\begin{minipage}[t]{0.4cm}	\begin{center}\vskip 0.4cm {\em 24}  \end{center}\end{minipage}& \begin{minipage}[t]{6cm}\begin{center}\vskip 0.003cm {\bf B1}, {\bf E}, {\bf C}, {\bf D}\\
		\textrm{ or } {\bf B1}, {\bf E},	\textrm{one between} {\bf A} and {\bf B2} \textrm{and one between} {\bf C} and {\bf D}
\end{center}\vskip 0.005cm
\end{minipage} & \begin{minipage}[t]{0.4cm}	\begin{center}\vskip 0.4cm{\em 3} \end{center}\end{minipage}& \begin{minipage}[t]{6cm}	\begin{center}\vskip 0.4cm{\bf B1} \end{center}\end{minipage}
			\\
			\hline
			{\em 16} & {\bf E} and
			\textrm{three among } {\bf A},  {\bf B2}, {\bf C}, {\bf D}  &  {\em 2} &
			\begin{minipage}[t]{6cm}\begin{center}  
					\textrm{one among } {\bf A}, {\bf B2}, {\bf E}  \end{center}\end{minipage}\\
			\hline
			{\em 12} & 
					{\bf B1}, {\bf E} \textrm{and one among } {\bf A}, {\bf B2}, {\bf C}, {\bf D}  &  {\em 1} &
			\textrm{ no conditions hold } \\
			\hline
			\multicolumn{4}{c}{  }\\
			
			\multicolumn{4}{c}{{\em Table 1}} \\
			
		\end{tabular}
	\end{center}
	
\end{theorem}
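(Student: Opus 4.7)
By Theorem \ref{gen2}, one has $K_7 = K(i, \zeta_7, y_1)$. The plan is to build the explicit tower
\[
K \subseteq K(i) \subseteq K(i, \zeta_7+\zeta_7^{-1}) \subseteq K(i, \zeta_7) \subseteq K(i, \zeta_7, \sqrt{\theta_1}) \subseteq K(i, \zeta_7, \sqrt{\theta_1}, x_1) \subseteq K_7,
\]
where $x_1 := \sqrt{\omega_1 b + \tfrac{1}{2}b\sqrt{\theta_1}}$, and to compute the degree of each step. Five of the six steps adjoin a square root and therefore have degree $1$ or $2$; the second step adjoins $\zeta_7+\zeta_7^{-1}$, whose minimal polynomial over $\mathbb{Q}$ has degree $3$, so that step has degree $1$ or $3$. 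The six conditions \textbf{A}, \textbf{B1}, \textbf{B2}, \textbf{C}, \textbf{D}, \textbf{E} are precisely the statements that the corresponding steps are non-trivial. To justify the last inclusion, recall from the proof of Theorem \ref{gen2} that $\sqrt{\theta_1} \in K(i, \zeta_7, y_1^4) \subseteq K_7$, and then $x_1 = y_1^2/\bigl(b(\omega_1+1+\tfrac{1}{2}\sqrt{\theta_1})\bigr) \in K_7$, so $K_7 = K(i,\zeta_7,\sqrt{\theta_1},x_1,y_1)$.

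Multiplying the step degrees gives $[K_7:K]$. The a priori maximum is $2 \cdot 3 \cdot 2 \cdot 2 \cdot 2 \cdot 2 = 96$, and the possible values of $[K_7:K]$ are products of subsets of $\{2, 3, 2, 2, 2, 2\}$, namely the twelve numbers $\{1, 2, 3, 4, 6, 8, 12, 16, 24, 32, 48, 96\}$, matching the twelve rows of Table 1.

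The key technical point for matching condition patterns to rows is the implication \textbf{D} $\Rightarrow$ \textbf{E}. Set $F := K(i, \zeta_7, \sqrt{\theta_1})$ and suppose \textbf{D} holds, that is, $x_1 \notin F$. If $y_1 \in F(x_1)$, write $y_1 = u + v x_1$ with $u, v \in F$. Using $y_1^2 = b\bigl(\omega_1 + 1 + \tfrac{1}{2}\sqrt{\theta_1}\bigr)\,x_1$ and comparing coefficients in the $F$-basis $\{1, x_1\}$ of $F(x_1)$ yields $u^2 + v^2 x_1^2 = 0$, hence $(u/v)^2 = -x_1^2 = (i x_1)^2$. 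Since $i \in F$, this forces $u/v = \pm i x_1 \in F$, contradicting $x_1 \notin F$; therefore $v = 0$, then $u = 0$, and $y_1 = 0$, impossible. So whenever \textbf{D} holds, \textbf{E} holds as well, which explains why every row of Table 1 with the condition \textbf{D} also includes \textbf{E}.

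The hardest part will be the remaining case analysis: one must show that, apart from \textbf{D} $\Rightarrow$ \textbf{E}, no further implications link the conditions, so that each row of Table 1 corresponds to a realisable configuration, and conversely every realisable configuration appears in exactly one row. This requires exhibiting explicit $b \in K$ and base fields witnessing each pattern, together with routine but lengthy checks that the remaining five conditions are otherwise independent; with the tower decomposition and the implication \textbf{D} $\Rightarrow$ \textbf{E} in hand, each such check reduces to verifying whether a specific element is a square in a specific intermediate field.
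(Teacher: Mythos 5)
Your tower is exactly the paper's, and your argument for \textbf{D} $\Rightarrow$ \textbf{E} is correct (indeed more detailed than the paper's one-line version of the same step, which is stated as the contrapositive: if \textbf{E} fails then \textbf{D} fails). But there are two genuine gaps, both of which contradict your closing claim that ``apart from \textbf{D} $\Rightarrow$ \textbf{E}, no further implications link the conditions.'' First, the table also relies on the implication that failure of \textbf{E} forces failure of \textbf{C} (equivalently \textbf{C} $\Rightarrow$ \textbf{E}): the paper derives this by writing $x_1$ and $y_1$ as elements of $K(i,\zeta_7,\sqrt{\theta_1})$ when \textbf{E} and \textbf{D} both fail and deducing $\sqrt{\theta_1}\in K(i,\zeta_7)$. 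Without it you cannot exclude, for example, the configuration $\{\textbf{C}\}$ from the $d=2$ row, $\{\textbf{B1},\textbf{C}\}$ from the $d=6$ row, or $\{\textbf{A},\textbf{C}\}$ from the $d=4$ row, all of which your argument would leave as admissible degree patterns not appearing in Table 1.

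Second, and more seriously, the table encodes a non-formal constraint: if \textbf{A}, \textbf{B1} and \textbf{B2} all hold, then \textbf{C} (hence \textbf{E}) must hold and $d\geq 48$. The paper gets this from a computational verification that $\theta_1$ is not a square in $\QQ(i,\zeta_7)$, combined with the observation that \textbf{A}, \textbf{B1}, \textbf{B2} force $K$ to be linearly disjoint from $\QQ(i,\zeta_7)$ over $\QQ$, so $\theta_1$ remains a non-square in $K(i,\zeta_7)$. This is what excludes $\{\textbf{A},\textbf{B1},\textbf{B2}\}$ from the $d=12$ row and $\{\textbf{A},\textbf{B1},\textbf{B2},\textbf{E}\}$ from the $d=24$ row (note the $d=24$ entry requires at least one of \textbf{C}, \textbf{D}). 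Your proposal has no mechanism to detect this, since it is not a consequence of the field-theoretic shape of the tower but of an arithmetic property of the specific element $\theta_1$; deferring the case analysis under the (false) assumption of independence therefore produces a table that disagrees with Table 1 in several rows.
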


\begin{proof}
	Consider the tower of extensions
	
	\[
	\begin{split}  K & \subseteq K(i)\subseteq K(i,\zeta_7+\zeta_7^{-1}) \subseteq K(i,\zeta_7) \subseteq K(i,\zeta_7,\sqrt{\theta_1})\subseteq K\left(i,\zeta_7, \sqrt{\omega_1b+\dfrac{1}{2}b\sqrt{\theta_1}}\right)\subseteq K\left(i, \zeta_7,y_1\right).\\
	\end{split}
	\]
	
	\noindent The degree $d:=[K_7:K]$ is the product of the degrees of the intermediate extensions appearing in the tower.
	A priori, each extension gives a contribute to the degree less than or equal to 2, except for the extension $K(i)\subset K(i,\zeta_7+\zeta_7^{-1})$ which gives a contribution dividing 3. However, some of the cases do not occur. 
If {\bf E}
	does not hold, then $y_1\in K\left(i,\z_7,\sqrt{\omega_1b+\dfrac{1}{2}b\sqrt{\theta_1}}\right)$, i.e. $y_1 = \alpha + \beta \sqrt{\omega_1b+\dfrac{1}{2}b\sqrt{\theta_1}}$, with $\alpha,\beta\in  K(i,\z_7,\sqrt{\theta_1})$. By
$y_1^2=x_1^3+bx_1$, this implies $\sqrt{\omega_1b+\dfrac{1}{2}b\sqrt{\theta_1}}\in K(i,\z_7,\sqrt{\theta_1})$ and then {\bf D} does not hold. If both {\bf E} and {\bf D} do not hold, i.e.\  $x_1=\alpha_x+\beta_x\sqrt{\theta_1}$, and
$y_1=\alpha_y+\beta_y\sqrt{\theta_1}$ with $\alpha_x,\beta_x,\alpha_y,\beta_y\in K(i,\z_7)$, then in a similar way $\sqrt{\theta_1} \in K(i,\z_7)$, meaning that {\bf C} does not hold either. Therefore condition {\bf E}
	may not hold (implying that conditions {\bf D} and {\bf C} do not hold as well)
	only when  $d\leq \dfrac{96}{8}=12$ and $d\neq 8$. 
In addition, with the use of the software of computational algebra AXIOM, we have verified that $\theta_1$ is not a square in $\QQ(i,\z_7)$. If conditions {\bf A}, {\bf B1} and {\bf B2}  hold,
then we have that $K$ is linearly disjoint from $\QQ(i,\z_7)$ over $\QQ$ and $\theta_1$ is not a square in $K(i,\z_7)$ as well.
Then condition {\bf C} must hold too and $[K_7:K]\geq 24$. For what we have discussed before,
this also implies that {\bf E} holds and then $d\geq 48$. Thus, if $d\leq 24$, then 
 {\bf A}, {\bf B1} and {\bf B2} cannot hold at
the same time.
The final computation that gives the degree and the corresponding conditions in Table 1 is straightforward.
\end{proof}

 Notice that $[K_7:K]\leq 96< 366=|\GL_2(\Z/7\Z)|$ and the
Galois representation

\[ \rho_{\E_1,7}:{\Gal}(\ov{K}/K) \rightarrow  \GL_2(\Z/7\Z) \]

\noindent is not surjective, in accordance with $\E_1$ having complex multiplication.

\section{Galois groups ${\Gal}(K_7/K)$ for the curves of $\mcF_1$} \label{gal1}
Let $\E_1$ be a curve of the family $\mcF_1$, let $G:={\Gal}(K_7/K)$ and let $d:=|G|$. 
Let $Q_{16}$ be the generalized quaternion group of order 16.

\begin{theorem} \label{galF1}
Let $K$ be a field with $\chara(K)\neq 2,3$ and let $\E_1$ be an elliptic curve with Weierstrass form
$y^2=x^3+bx$, where $b\in K$. Then $\Gal(K_7/K)$ is isomorphic to a subgroup
of $Q_{16}\rtimes \Z/6\Z$. In particular, if $[K_7:K]=96$, then $\Gal(K_7/K)\simeq Q_{16}\rtimes \Z/6\Z$.
\end{theorem}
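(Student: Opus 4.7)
The plan is to realize $\Gal(K_7/K)$ as a subgroup of the normalizer of a non-split Cartan in $\GL_2(\FF_7)$, and then to identify that normalizer abstractly as $Q_{16}\rtimes\Z/6\Z$.

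First I would exploit the CM. Since $\E_1$ has complex multiplication by $\Z[i]$ and $7$ is inert in $\Z[i]$, the quotient $\Z[i]/7\simeq\FF_{49}$ acts on $\E_1[7]$, making it a one-dimensional $\FF_{49}$-vector space. The endomorphism $\phi_1$ is defined over $K(i)$, so the restriction of the mod-$7$ Galois representation $\rho_{\E_1,7}$ to $\Gal(\ov K/K(i))$ commutes with the CM action and factors through $\mathrm{Aut}_{\FF_{49}}(\E_1[7])\simeq\FF_{49}^{\times}$, i.e.\ through a non-split Cartan subgroup $C\subset\GL_2(\FF_7)$ of order $48$. If $i\notin K$, the nontrivial element of $\Gal(K(i)/K)$ sends $\phi_1$ to $-\phi_1$ and thus acts on $C$ as the Frobenius of $\FF_{49}/\FF_7$. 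Hence $\Gal(K_7/K)$ embeds into the normalizer $N:=C\rtimes\Gal(\FF_{49}/\FF_7)$, which has order $96$.

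Next I would identify $N$ with $Q_{16}\rtimes\Z/6\Z$ by a short group-theoretic calculation. Writing $\FF_{49}^{\times}\simeq\Z/48\Z\simeq\Z/16\Z\times\Z/3\Z$ via CRT, the Frobenius acts as multiplication by $7$: non-trivially on $\Z/16\Z$ and trivially on $\Z/3\Z$ (since $7\equiv 1\pmod 3$). Thus $N\simeq S\times\Z/3\Z$, where $S:=\Z/16\Z\rtimes_{\times 7}\Z/2\Z=\langle a,s\mid a^{16}=s^{2}=1,\ sas=a^{7}\rangle$ is the semidihedral group of order $32$. The subgroup $H:=\langle a^2,\,as\rangle$ has index $2$ in $S$, hence is normal; setting $c:=a^2$ and $d:=as$, a direct computation gives $c^8=1$, $d^2=a^8=c^4$, and $dcd^{-1}=c^{-1}$, so $H\simeq Q_{16}$. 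The involution $s$ lies outside $H$ and provides a complement, so $S=Q_{16}\rtimes\Z/2\Z$; combining with the trivially acting $\Z/3\Z$ factor yields $N\simeq Q_{16}\rtimes(\Z/2\Z\times\Z/3\Z)=Q_{16}\rtimes\Z/6\Z$, establishing the first assertion.

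Since $|Q_{16}\rtimes\Z/6\Z|=96$, if $[K_7:K]=96$ then the embedding is forced to be surjective, proving the ``in particular'' part. The main subtlety I anticipate is the group-theoretic identification of $N$: one needs to choose generators of $S$ so that the exponent $7\pmod{16}$ produces the quaternionic relation $dcd^{-1}=c^{-1}$ (rather than a dihedral one), and then exhibit an involution outside $H$ to split the extension.
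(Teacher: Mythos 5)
Your argument is correct, and it takes a genuinely different route from the paper's. The paper works with explicit matrices inside $\GL_2(\Z/7\Z)$: it shows $H=\Gal(K_7/K(\z_7))$ sits in $\SL_2(7)$ with order $16$, hence is a $2$-Sylow subgroup, exhibits a concrete generalized quaternion $2$-Sylow $\langle \phi_1,\tau_1\rangle$ to identify $H\simeq Q_{16}$, and then passes through $\GL_2(\Z/7\Z)\simeq \SL_2(7)\rtimes \FF_7^{*}$ to get $G\simeq Q_{16}\rtimes\Z/6\Z$, finishing with a matrix computation showing $\psi_1$ and $\phi_1$ do not commute. You instead invoke the standard description of the mod-$7$ image of a CM curve with $7$ inert as a subgroup of the normalizer $N$ of a non-split Cartan, and identify $N\simeq \mathrm{SD}_{32}\times\Z/3\Z\simeq Q_{16}\rtimes\Z/6\Z$ by pure group theory (your computations $c^8=1$, $d^2=c^4$, $dcd^{-1}=c^{-1}$ and the complement $\langle s\rangle\times\Z/3\Z$ all check out, and your $\langle a^2,as\rangle$ is precisely $N\cap\SL_2(7)$, so it is the same $Q_{16}$ as the paper's). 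Your route buys a cleaner and more uniform proof of the containment assertion for arbitrary $K$, and it makes the splitting of $N\twoheadrightarrow N/H$ explicit --- a point the paper treats somewhat casually when it jumps from $\GL_2=\SL_2\rtimes\FF_7^{*}$ to $G=H\rtimes\Z/6\Z$. What the paper's hands-on computation buys is the explicit generators $\phi_1,\varphi_1,\psi_1$ and their commutation relations, which are reused throughout the subsequent case-by-case determination of the smaller Galois groups and in the count of cyclic subgroups for the local-global application. Two small points to tidy: you only discuss $i\notin K$ (when $i\in K$ the image already lies in $C\subset N$, so the conclusion persists), and since the bare notation $Q_{16}\rtimes\Z/6\Z$ does not specify the action, it is worth a sentence confirming that your semidirect product coincides with the paper's group, both being the order-$96$ Cartan normalizer.
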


\begin{proof}
Assume that all the conditions in Theorem \ref{deg1} hold. Then $[K_7:K]=96$. The image of $\Gal(\overline{K}/K)$ via the Galois representation $\rho_{\E_1,7}$ is a subgroup of $\GL_2(\Z/7\Z)$ isomorphic to $G=\Gal(K_7/K)$. We denote by $G$ both $\Gal(K_7/K)$ and its image in $\GL_2(\Z/7\Z)$. As a consequence of the properties of the Weil pairing, the action of
$\Gal(K_7/K)$ on $\z_7$ is via determinant, i.e.
$\sigma(\z_7)=\z_7^{\det(\sigma)}$, where $\sigma$ denotes both an element of $G$ and
its image in $\GL_2(\Z/7\Z)$. Consider the tower of extensions

\begin{figure}[H]
	\centering
	\begin{tikzpicture}
		\node (a) at (14,0) {$K_7$};
		\node (b) at (14,-2) {$K(\z_7)$};
		\node (c) at (14,-4) {$K$};
		\draw (a) -- (b);
		\draw (b) -- (c);
		\node (d) at (12.7,-2) {$G$};
		\node (e) at (15,-1) {$H$};
		\node (f) at (15,-3) {$G/H$};
		\draw[bend right]  (a) edge (c);
		\draw[bend left]  (a) edge (b);
		\draw[bend left]  (b) edge (c);
	\end{tikzpicture}
	\caption[]{}
	\label{fig:tower}
\end{figure}

\noindent We denote by $H$ both $\Gal(K_7/K(\z_7))$ and its image in $\GL_2(\Z/7\Z)$. We have that  the Galois group $\Gal(K(\z_7)/K)\simeq G/H$ is isomorphic to $\ZZ/6\ZZ$. 
If $\sigma$ fixes $\z_7$, then $\det(\sigma)=1$ and $\sigma \in \SL_2(7)$. 
Therefore
$H$ is isomorphic to a subgroup of $\SL_2(7)$ of order 16. 
It is well-known that $|\SL_2(7)|=336=16\cdot 21$. 
Therefore the image of $H$ in $\GL_2(\Z/7\Z)$ is a $2$-Sylow subgroup of $\SL_2(7)$. By Sylow's Theorems, the $2$-Sylow subgroups
are all conjugate and in particular they are all isomorphic. So it suffices to determine the structure of a $2$-Sylow subgroup
of $\SL_2(7)$ to get $H$ up to isomorphism. We know that one of the automorphisms of $G$ is the complex multiplication $\phi_1$.
We consider again $\{P_1, iP_1\}$ as a generating set of $\E_1[7]$. We have 

$$P_1 \xmapsto{\phi_1} iP_1  \xmapsto{\phi_1} -P_{1}  \xmapsto{\phi_1} -iP_1  \xmapsto{\phi_1}  P_1.$$

\noindent Then the representation of $\phi_1$ in $\GL_2(\Z/7\Z)$ is

$$\phi_1=\left(\begin{array}{cc}  0 & -1 \\ 1 & 0 \end{array}\right)$$

\noindent and $\det(\phi_1)=1$. Therefore $\phi_1\in H \lesssim \SL_2(7)$. Observe that $\phi_1^2=-\Id$.

\noindent Consider the matrix

$$\tau_1=\left(\begin{array}{cc}  2 & 1 \\ 1 & 1 \end{array}\right).$$

\noindent Since $\det(\tau_1)=1$, then $\tau_1\in \SL_2(7)$. In addition $\tau_1$ has order 8 and in particular
$\tau_1^{4}=-\Id$. One can easily verify that $\phi_1\tau_1=\tau_1^{-1}\phi_1$. Therefore the group generated by $\phi_1$ and
$\tau_1$ has the following presentation

$$\langle \phi_1, \tau_1 | \phi_1^2=\tau_1^{4}=-\Id, \phi_1\tau_1=\tau_1^{-1}\phi_1\rangle$$

\noindent and it is then isomorphic to the generalized quaternion group $Q_{16}$, i.e.\ the dicyclic group $\textrm{Dic}_{4}$.
This is a group of order 16, hence it is a $2$-Sylow subgroup of $\SL_2(7)$. Thus $H$ is isomorphic to $Q_{16}$ too. 
We have 

$$H=\langle \phi_1, \varphi_1 | \phi_1^2=\varphi_1^{4}=-\Id, \phi_1\varphi_1=\varphi_1^{-1}\phi_1\rangle,$$

\noindent where $\varphi_1$ is a conjugate of  $\tau_1$. To deduce $G$, we have to look more closely at the automorphism generating $G/H\simeq \ZZ/6\ZZ$. In fact, $\GL_2(\Z/7\Z)\simeq \SL_2(7) \rtimes  \FF_7^*\simeq \SL_2(7) \rtimes \Z/6\Z$, where $\FF_7$ is the finite field with $7$ elements. Thus $G \simeq H \rtimes \Z/6\Z\simeq Q_{16}\rtimes \Z/6\Z$. We are going to show that this semidirect product is not a direct product.
 The group $\Gal(K(\z_7)/K)\simeq G/H$ is generated by an automorphism $\psi_1$ corresponding to the automorphism $\sigma_1$ of $\Q(\z_7,i)/\Q$ mapping $\z_7$ to $\z_7^5$. Since $\sigma_1(\omega_1)=\omega_3$ and $\sigma_1(\theta_1) = \theta_3$, then we have that $\psi_1$ acts on the basis $\{P_1,iP_1\}$ by mapping $P_1$ to one of the points
$\pm P_{s}$, $\pm iP_{s}$, for some $s\in \{5,6\}$. Observe that if $x_1$ is sent to $x_s$ (respectively $-x_s$), for some $s\in \{5,6\}$, then $-x_1$ is sent to $-x_s$ (resp. $x_s$). 
Therefore if $\psi_1$ maps the point $P_1$ to $P_s$ (resp. $-P_s$) then $\psi_1$ maps the point $iP_1$ to one of the points $\pm iP_{s}$. Similarly
if $\psi_1$ maps the point $P_1$ to  $iP_{s}$ (resp. $-iP_s$), then $\psi_1$ maps the point $iP_1$ to one of the points $\pm P_{s}$. Let $\psi_1(P_1)=\alpha P_1 +\beta iP_1$. Then $\psi_1(iP_1)=\pm (-\beta P_1 +\alpha iP_1)$, i.e.

$$\psi_1=\left(\begin{array}{cc}  \alpha & -\beta \\ \beta & \alpha \end{array}\right) \quad\text{or}\quad \psi_1=\left(\begin{array}{cc}  \alpha & \beta \\ \beta & -\alpha \end{array}\right). $$

\noindent Since $iP_1=\phi_1(P_1)$, if $\psi_1$ and $\phi_1$ commute, then $\psi_1(iP_1)=\phi_1(\psi_1(P_1))=\alpha iP_1 -\beta P_1$. In this case we have such a representation of
$\psi_1$:

$$\psi_1=\left(\begin{array}{cc}  \alpha & -\beta \\ \beta & \alpha \end{array}\right).$$

\noindent Observe that every power of $\psi_1$ is a matrix of the same type

$$\psi_1^n=\left(\begin{array}{cc}  \alpha_n & -\beta_n \\ \beta_n & \alpha_n \end{array}\right),$$

\noindent for some $\alpha_n, \beta_n\in \ZZ/7\ZZ$. In particular

$$\psi_1^3=\left(\begin{array}{cc}  \alpha^3-3\alpha\beta^2 & \beta^3-3\alpha^2\beta \\  -\beta^3+3\alpha^2\beta  &  \alpha^3-3\alpha\beta^2  \end{array}\right)=\left(\begin{array}{cc}  \alpha_3 & -\beta_3 \\ \beta_3 & \alpha_3 \end{array}\right),$$

\noindent Since $G/H\simeq \Z/6\Z$ and
$G= H \rtimes G/H$, then $\psi_1^6=\Id$. Hence

$$(\psi_1^3)^2=\left(\begin{array}{cc}  \alpha_3^2 -\beta_3^2 & -2\alpha_3\beta_3 \\ 2\alpha_3\beta_3 & \alpha_3^2 -\beta_3^2 \end{array}\right)\equiv \left(\begin{array}{cc} 1 & 0 \\ 0 & 1 \end{array}\right) (\modn 7).$$

\noindent Thus $\alpha_3\beta_3\equiv 0 \hspace{0.1cm} (\modn 7)$, implying $\alpha_3=0$ or $\beta_3=0$. Therefore
$\alpha^3-3\alpha\beta^2\equiv 0 \hspace{0.1cm} (\modn 7)$ or $\beta^3-3\alpha^2\beta\equiv 0 \hspace{0.1cm} (\modn 7)$, i.e.\ $\alpha=0$ or $\beta=0$ or $\alpha^2\equiv 3\beta^2 \hspace{0.1cm} (\modn 7)$
or $\beta^2\equiv 3\alpha^2 \hspace{0.1cm} (\modn 7)$. The last two congruences have no solutions in $\ZZ/7\ZZ$. Thus $\alpha=0$ or $\beta=0$. 
Assume $\alpha=0$, then

$$\psi_1=\left(\begin{array}{cc} 0 & -\beta \\ \beta & 0 \end{array}\right)= \beta \phi_1.$$

\noindent Since  $\psi_1^6=\Id$  and $\phi_1^2=-\Id$, then $-\beta^6\equiv 1 \hspace{0.1cm} (\modn 7)$ and we have a contradiction with Fermat's Little Theorem. If $\beta=0$, then the automorphism $\psi_1$ is represented by a scalar matrix $\alpha\cdot \Id$. Since $\psi_1$ acts on $\z_7$ via determinant and we are assuming that $\psi_1$ is the automorphism of order 6 induced by the automorphism $\sigma_1$ mapping $\z_7$ to $\z_7^5$, then $\det(\psi_1)\equiv 5 \hspace{0.1cm} (\modn 7)$, i.e. $\alpha^2\equiv 5 \hspace{0.1cm} (\modn 7)$. This congruence has no solutions in $\Z/7\Z$.
Therefore $$\psi_1=\left(\begin{array}{cc}  \alpha & \beta \\ \beta & -\alpha \end{array}\right)$$

\noindent and $\psi_1$ and $\phi_1$ do not commute. Hence $G$ is not isomorphic to $Q_{16}\times \Z/6\Z$. 
If $[K_7:K]< 96$, then $G$ is isomorphic to a proper subgroup of $Q_{16}\rtimes \ZZ/6\ZZ$.
\end{proof}

\bigskip We are going to describe the Galois group $G={\Gal}(K_7/K)$ (up to isomorphism), for all possible $d:=[K_7:K]\leq 96$. 
In the last part of the proof of Theorem \ref{galF1} we have showed that 

$$\psi_1=\left(\begin{array}{cc}  \alpha & \beta \\ \beta & -\alpha \end{array}\right).$$

\noindent Hence

$$\psi_1^2=\left(\begin{array}{cc}  \alpha^2+\beta^2 & 0 \\ 0 & \alpha^2+\beta^2 \end{array}\right)=-\det(\psi_1)\Id$$

\noindent and then $\psi_1^2$ and $\psi_1^4$ commute with every other automorphism of $G$.
Observe that instead $\psi_1^3=-\det(\psi_1)\psi_1$ does not commute with $\phi_1$.
Moreover observe that $\phi_1^2=\varphi_1^4=-\Id$ is an automorphism of $\Gal(K_7/K(i))$. By \cite[Chapter II, Theorem 2.3]{Sil2}, the extension  $K_7/K(i)$ is abelian. If all the conditions in the statement of Theorem \ref{gen2} hold, then  $\Gal(K_7/K(i))=\langle \varphi_1,\psi_1 \rangle\simeq \Z/8\Z \times \Z/6\Z$. In particular $\psi_1$ commutes with 
$\varphi_1$. We also have that if condition {\bf A} does not hold, then $G$ is abelian.
Furthermore we recall that every subgroup of a generalized quaternion group is cyclic or it is a (generalized) quaternion group itself. The only proper non abelian subgroup of $Q_{16}$ is $Q_8$. The other proper nontrivial subgroups of $Q_{16}$ are the groups $\ZZ/m\ZZ$, with $m\in \{2,4,8\}$.
 If $Q_8$ is a subgroup of $G$ (and thus of $H$) under certain conditions, clearly we have that $\phi_1\in H$. In fact, if  $\phi_1\notin H$, then $H$ is an abelian group. If {\bf E} and at least one between {\bf C} and {\bf D} hold, then $\phi_1\in H$. We also observe that  $\phi_1\varphi_1^n=\varphi_1^{-n}\phi_1$, for every power $\varphi_1^n$ of $\varphi_1$.

\bigskip \emph{Galois groups $\Gal(K(E_1[7])/K)$}

\begin{description}
	
	\item[$d=96$]
	If the degree $d$ of the extension $K_7/K$ is 96, then all the conditions in Table 1 hold. We have already proved in Theorem \ref{galF1} that $G \simeq Q_{16} \rtimes \ZZ/6\ZZ$.
	
	\item[$d=48$]
	If the degree $d$ of the extension $K_7/K$ is 48, then condition {\bf B1} holds, because of $3|d$. If {\bf E} does not hold, then
	as stated in the proof of Theorem \ref{deg1} we have that consequently {\bf D} and {\bf C} do not hold and
	we would have an extension of degree $d<48$. Therefore condition {\bf E} holds.

\begin{description}
	\item[-] if {\bf A} does not hold, then $G$ is abelian, as mentioned above. We have $G= \langle \varphi_1, \psi_1\rangle \simeq
\ZZ/8\ZZ\times \ZZ/6\ZZ$.
 \item[-] if {\bf A}  holds, then at least one among {\bf C}  and  {\bf D} holds. As stated above, condition {\bf E}
	holds as well. Thus $\phi_1$ is  an automorphism of order 4 of $G$ and $G$ is not abelian (recall that we have already observed that $\phi_1$ does not commute with any map in $Q_{16}$ except its powers). 
\begin{description} 

\item[] If {\bf B2} holds, then $\psi_1$ has order 6 and  $G \simeq Q_{8} \rtimes \ZZ/6\ZZ$. 

\item[] If {\bf B2} does not hold, then $G/H\simeq \ZZ/3\ZZ$ is generated by $\psi_1^2$, which is represented by a scalar matrix, as we have observed above. Thus $G \simeq Q_{16} \times \ZZ/3\ZZ$.
\end{description}
	\end{description}
	
	\item[$d=32$]
	If the degree $d$ of the extension $K_7/K$ is 32, then all the conditions hold but {\bf B1}. Thus we have that $G/H\simeq \ZZ/2\ZZ$
	is generated by the automorphism $\psi_1^3$ mapping $\z_7$ to $\z_7^{-1}$ and $G \simeq Q_{16} \rtimes \ZZ/2\ZZ$.
	
	\item[$d=24$]
	Condition {\bf B1} must hold in all cases when $d=24$, because of $3|d$. We also have that {\bf E} holds (recall that if  {\bf E} does not hold, then  both {\bf C} and {\bf D} do not hold as well and $d<24$).
	\begin{description}
	\item[-]  If {\bf A} does not hold, then we have a subgroup of $\ZZ/8\ZZ\times \ZZ/6\ZZ$.
	\begin{description}
	\item[] Therefore, if  {\bf B2} does not hold too, then  $G/H\simeq \ZZ/3\ZZ$
	and $G\simeq \ZZ/8\ZZ\times \ZZ/3\ZZ$. 
\item[] If {\bf B2} holds, then $G/H\simeq \ZZ/6\ZZ$ and
	$G\simeq \ZZ/4\ZZ\times \ZZ/6\ZZ$.
\end{description}
\item[-] 	 Assume that {\bf A} holds.  By the proof of Theorem \ref{deg1}, we have that {\bf B2} does not hold and	 one between {\bf C} and {\bf D} holds. Then $\phi_1\in G$ and $G/H = \langle \psi_1^2 \rangle \simeq \ZZ/3\ZZ$. Therefore $H$ is a subgroup of $Q_{16}$ of order 8, which is not abelian (recall that $\phi_1$ does not commute with any map in $Q_{16}$ except its powers). Thus $G\simeq Q_{8} \times  \ZZ/3\ZZ$.
\end{description}

	\item[$d = 16$]
	If the degree $d$ of the extension $K_7/K$ is 16, as stated in Table 1,  then condition {\bf B1} does not hold and {\bf E} holds. Only one among the other conditions does not hold. 
	\begin{description}
	\item[-] If {\bf B2} does not hold, then 
	$G/H$ is trivial. Therefore $G\simeq Q_{16}$.
\item[-] If {\bf B2} holds, then $G/H\simeq \langle \psi_1^3\rangle\simeq \ZZ/2\ZZ$ and $|H|=8$. 
\begin{description}
	\item[]
If  {\bf A} does not hold, 
	then we have an abelian extension and
	$G\simeq \ZZ/8\ZZ\times \ZZ/2\ZZ$. 
	\item[] Assume that {\bf A} holds. Since one between
	{\bf C} and {\bf D} holds as well, then $\phi_1 \in G$ and $H$ is not abelian. 
	Therefore $G\simeq Q_8\rtimes \ZZ/2\ZZ$.
	\end{description}
	\end{description}

	\item[$d = 12$]
	If the degree $d$ of the extension $K_7/K$ is 12, then $Q_8$ cannot be a subgroup of $G$. Condition {\bf B1} holds because of $3|d$. We have the following cases.
\begin{description}
	\item[-] If {\bf B2} holds, then $G/H$  has order 6 and $H$ has order 2 and it is generated by $-\Id$. Thus $G$ is abelian and $G\simeq  \Z/3\Z\times (\Z/2\Z)^2$.
	\item[-] If {\bf B2} does not hold, then  $G/H$  has order 3 and is generated by $\psi_1^2$. In this case $H$ has order 4. Since every abelian subgroup of $Q_{16}$ is cyclic and $\psi_1^2$ commutes with every other automorphism of $G$, then $G\simeq \Z/4\Z\times \Z/3\Z$.
		\end{description}
	
	\item[$d=8$]
	If the degree $d$ of the extension $K_7/K$ is 8, then {\bf B1} does not hold and {\bf E} holds.
\begin{description}
	\item[-] If {\bf A} does not hold, then we have an abelian extension. 
\begin{description}
	\item[] If {\bf B2} holds, then $G/H$  has order 2, $H$ has order 4
and $G\simeq \Z/4\Z\times \Z/2\Z$. 
\item[] If {\bf B2} does not hold, then $G=H\simeq \Z/8\Z$. 
	\end{description}
	\item[-] Assume that {\bf A} holds. 
\begin{description}
	\item[]  If  {\bf B2} does not hold, then $G=H$ has order 8. In this case both  
 {\bf C} and {\bf D} hold, then $\phi_1$ is an automorphism of $G$ and
$G\simeq Q_8$. 
	\item[] If  {\bf B2} holds, then one between  {\bf C} and {\bf D} holds. 
We have that $G/H$  has order 2 and $H$ has order 4. The complex multiplication $\phi_1$ is
an automorphism of $G$ and then $G\simeq \Z/4\Z\rtimes \Z/2\Z$.
	\end{description}
	\end{description}
	
	\item[$d=6$]
	If the degree $d$ of the extension $K_7/K$ is 6, then  condition {\bf B1} must hold in all cases, as listed in Table 1, and $G/H$ has order divided by 3. Then in every case we have an abelian group of order 6, i.e. $G \simeq \ZZ/3\ZZ\times \ZZ/2\ZZ$.
	
	\item[$d=4$]
	If  the degree $d$ of the extension $K_7/K$ is 4, then {\bf B1} does not hold and two of the other conditions hold. 
\begin{description}
	\item[-] If {\bf B2} does not hold, then $G/H$ is trivial and $G=H$ is isomorphic to a subgroup of $Q_{16}$
	of order $4$. Thus $G\simeq \ZZ/4\ZZ$. 
\item[-] If  {\bf B2} holds, then $G/H\simeq \ZZ/2\ZZ$ and $G$ is isomorphic to the Klein group $\ZZ/2\ZZ\times\ZZ/2\ZZ$.
	\end{description}
	
	\item[$d\leq 3$]
	If the degree $d$ of the extension $K_7/K$ is 3, 2 or 1, the Galois group is respectively $\Z/3\Z$, $\Z/2\Z$ or $\{{\rm Id}\}$.
	
\end{description}

\normalcolor

\section{Generators of the $7$-th division field for elliptic curves $y^2=x^3+c$} \label{subgen1}
Let $\E_2$ be an elliptic curve with Weierstrass form $y^2=x^3+c$, with $c\in K$.
Then the $m$-th division polynomial $\Psi_m(x)$  of $\E_2$ is a polynomial in $x^3$, because
of the automorphism of $\E_2$ given by the complex multiplication $\phi_2$, which maps $\left(x,y\right)$ to $\left(\z_3x,y\right)$.
If $m=p$ is an odd prime, then $\Psi_p(x)$ has degree $\dfrac{p^2-1}{2}$. Observe that $3| p^2-1$. Set $t:=x^3$, then $\Psi_p(t)$ 
is a polynomial of degree $\dfrac{p^2-1}{6}$  in the variable $t$. For $1\leq i\leq \dfrac{p^2-1}{6}$, let $\delta_j$ 
be the roots of $\Psi_p(t)$.
Therefore
the $p^2-1$ abscissas of the $p$-torsion points of $\E_2$ of exact order $p$ are
$\left\{\sqrt[3]{\delta_jc}, \z_3\sqrt[3]{\delta_jc}, \z_3^2\sqrt[3]{\delta_jc}\hspace{0.1cm}  \big|\hspace{0.1cm} 1\leq j\leq (p^2-1)/6\right\}$.
We also have that the ordinates of the points with abscissas in  $\left\{\sqrt[3]{\delta_j}, \z_3\sqrt[3]{\delta_j}, \z_3^2\sqrt[3]{\delta_j}\right\}$ are $\pm \sqrt{(\delta_j+1)c}$. The point  $\phi_2\left(\left(\sqrt[3]{\delta_jc},\sqrt{\delta_j+c}\right)\right)=
\left(\zeta_3\sqrt[3]{\delta_jc},\sqrt{\delta_j+c}\right)$ is still a $p$-torsion point of $\E_2$, for every $\delta_j$.
If $P_j=\left(\sqrt[3]{\delta_jc},\sqrt{\delta_j+c}\right)$ and $\phi_2\left(P_j\right)$ are linearly independent, then
$\{P_j,\phi_2\left(P_j\right)\}$ is a generating set for $\E_2[p]$. In this case, we have both
$K\left(\E_2[p]\right)=K\left(\sqrt[3]{\delta_jc},\z_3,\sqrt{\left(\delta_j+1\right)c}\right)$ and $K\left(\E_2[p]\right)=K\left(\sqrt[3]{\delta_jc},\z_p,\sqrt{\left(\delta_j+1\right)c}\right)$
(this last equality following by \cite[Theorem 1.1]{BP2}).
We now make these generating sets explicit for $p = 7$, by producing the coordinates of the points in $\E_2[7]$.

\bigskip\par We denote by $r_7(x)$ the $7$-th division polynomial of a curve $\E_2\in \mcF_2$. We have

$$
\begin{array}{ll}
	r_7(x):=& 7x^{24}+3944c \hspace{0.05cm}x^{21}-42896c^2x^{18}-829696c^3x^{15}-928256c^4x^{12}\\
	&-1555456c^5x^9-2809856c^6x^6-802816c^7x^3+65536c^8.\\
\end{array}$$

\noindent Let $\sigma_2$ be the automorphism of $\QQ(\z_3,\z_7)/\QQ$ mapping $\zeta_7$ to $\z_7^5$,
let $\varphi$ be the automorphism of $\QQ(\z_3,\z_7)/\QQ$ mapping  $\z_3$ to $\z_3^2$ and let

 \[
\begin{split} \delta_1 & :=-((- 132\zeta_3 - 120)\zeta_7^5  + (- 168\zeta_3 - 12)\zeta_7^4  + (- 24\zeta_3 + 60)\zeta_7^3\\
& \hspace{0.85cm}+(- 60\zeta_3 - 84)\zeta_7^2  + (- 192\zeta_3 - 96)\zeta_7 - 96\zeta_3 + 52); \\
 \delta_2 & :=\sigma_2(\delta_1)=-((- 24\zeta_3 - 84)\zeta_7^5  + (36\zeta_3 - 108)\zeta_7^4  + (108\zeta_3 - 72)\zeta_7^3\\
& \hspace{0.85cm} +(168\zeta_3 + 12)\zeta_7^2  + (144\zeta_3 + 72)\zeta_7 + 72\zeta_3 + 64); \\
 \delta_3 & :=\sigma_2(\delta_2)=-((108\zeta_3 + 180)\zeta_7^5  + (- 60\zeta_3 + 24)\zeta_7^4  + (132\zeta_3 + 120)\zeta_7^3\\
&\hspace{0.85cm}+(- 36\zeta_3 + 108)\zeta_7^2  + (72\zeta_3 + 36)\zeta_7 + 36\zeta_3 + 172); \\
 \delta_4 & :=\sigma_2(\delta_3)=-((132\zeta_3 + 12)\zeta_7^5  + (168\zeta_3 + 156)\zeta_7^4  + (24\zeta_3 + 84)\zeta_7^3 \\
& \hspace{0.85cm} + (60\zeta_3 - 24)\zeta_7^2+(192\zeta_3 + 96)\zeta_7 + 96\zeta_3 + 148); \\
\delta_5 & :=\sigma_2(\delta_4)=-((24\zeta_3 - 60)\zeta_7^5  + (- 36\zeta_3 - 144)\zeta_7^4  + (- 108\zeta_3 - 180)\zeta_7^3\\
& \hspace{0.85cm}+(- 168\zeta_3 - 156)\zeta_7^2  + (- 144\zeta_3 - 72)\zeta_7 - 72\zeta_3 - 8); \\
  \delta_6 & :=\sigma(\delta_5)=-((- 108\zeta_3 + 72)\zeta_7^5  + (60\zeta_3 + 84)\zeta_7^4  + (- 132\zeta_3 - 12)\zeta_7^3\\
&  \hspace{0.85cm}+(36\zeta_3 + 144)\zeta_7^2  + (- 72\zeta_3 - 36)\zeta_7 - 36\zeta_3 + 136); \\
 \delta_7  &:=\ddfrac{ 12\z_3 +8}{7}; \\
  \delta_8 & :=\varphi(\delta_8)=-\ddfrac{ 12\z_3 +4}{7}. \\
 \end{split}
 \]

\noindent The polynomial $r_7(x)$ factors over $K(\z_3,\zeta_7)$ as follows

$$	r_7(x)=  7 \prod_{j=1}^{8}( x^3 +\delta_jc).$$

\bigskip \noindent Then, as mentioned above, the 48 torsion points of $\E_2$ with exact order 7 are:

\[
\begin{array}{llllll}
	\pm P_j &=(x_j,\pm y_j) &=\left(\sqrt[3]{\delta_jc}, \pm  \sqrt{(\delta_j+1)c}\right); \\
	\pm \phi(P_j)  &=(\z_3x_j,\pm y_j) &=\left(\z_3\sqrt[3]{\delta_jc}\hspace{0.1cm}, \pm  \sqrt{(\delta_j+1)c}\right);  \\
	\pm \phi^2(P_j) &=(\z_3^2x_j,\pm y_j) &= \left(\z_3^2\sqrt[3]{\delta_jc}\hspace{0.1cm}, \pm  \sqrt{(\delta_j+1)c}\right); \\
\end{array} \]

\noindent for $1\leq j\leq 8$.

\begin{theorem} \label{zeta3} Let $\delta_j$ be as above, with $1\leq j\leq 6$.
	Then
	
	$$K_7=K\left(\sqrt[3]{\delta_jc},\zeta_3,\sqrt{(\delta_j+1)c}\right)=K\left(\sqrt[3]{\delta_jc},\zeta_7,\sqrt{(\delta_j+1)c}\right).$$
\end{theorem}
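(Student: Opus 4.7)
The plan is to mirror the strategy of Theorem \ref{gen2}: once we exhibit a $\ZZ/7\ZZ$-basis of $\E_2[7]$ of the form $\{P_j, \phi_2(P_j)\}$ for every $j \in \{1, \dots, 6\}$, the two equalities follow immediately, the first by reading off the coordinates of the two basis points and the second by applying \cite[Theorem 1.1]{BP2} to this same basis.

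The main obstacle, absent in the $\mcF_1$ case, is that the rational prime $7$ splits in $\End(\E_2) = \ZZ[\zeta_3]$: since $7 \equiv 1 \pmod 3$, we have $7 = \pi\bar{\pi}$ with $N(\pi) = N(\bar{\pi}) = 7$, so $\E_2[7] = \E_2[\pi] \oplus \E_2[\bar{\pi}]$. A point $P \in \E_2[7]$ of exact order $7$ fails to yield a basis $\{P, \phi_2(P)\}$ precisely when $P$ lies in one of these two factors; equivalently, when $\phi_2(P) = 2P$ or $\phi_2(P) = 4P$, the two roots of $x^2 + x + 1 \equiv 0 \pmod 7$ and hence the reductions of $\zeta_3$ modulo $\pi$ and $\bar{\pi}$. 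There are exactly $12$ such exceptional non-trivial points, and one has to check that they are precisely the points with abscissas coming from $\delta_7 = (12\zeta_3 + 8)/7$ and $\delta_8 = -(12\zeta_3 + 4)/7$. This is consistent with the observation that $\delta_7, \delta_8 \in \QQ(\zeta_3)$ while $\delta_1, \dots, \delta_6 \notin \QQ(\zeta_3)$ (so that $\{\delta_7, \delta_8\}$ forms a Galois orbit by itself), and a direct group-law computation on $P_7$ and $P_8$ pins down $\phi_2(P_7) = n_1 P_7$ and $\phi_2(P_8) = n_2 P_8$ for some $\{n_1, n_2\} = \{2, 4\}$.

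With the basis property in hand, for $j \in \{1, \dots, 6\}$ we obtain
\[
K_7 = K\bigl(x_j, y_j, x(\phi_2(P_j)), y(\phi_2(P_j))\bigr) = K(x_j, y_j, \zeta_3 x_j, y_j) = K(\zeta_3, x_j, y_j),
\]
and substituting $x_j = \sqrt[3]{\delta_j c}$ and $y_j = \sqrt{(\delta_j + 1)c}$ yields the first equality. For the second, \cite[Theorem 1.1]{BP2} applied to the basis $\{P_j, \phi_2(P_j)\}$ (and using that $y(\phi_2(P_j)) = y_j$) gives $K_7 = K(x_j, \zeta_7, y_j) = K\bigl(\sqrt[3]{\delta_j c}, \zeta_7, \sqrt{(\delta_j + 1)c}\bigr)$, completing the proof.
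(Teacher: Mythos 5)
Your proposal is correct and follows the same skeleton as the paper's proof: choose $\{P_j,\phi_2(P_j)\}$ as a candidate basis, observe that failure means $\phi_2(P_j)=nP_j$ with $n^2+n+1\equiv 0 \pmod 7$, i.e.\ $n\in\{2,4\}$, rule this out for $j\leq 6$, and then read off the two generating sets (the second via \cite[Theorem 1.1]{BP2}). The difference lies in how the exclusion is carried out. The paper verifies by computer algebra that $x(2P_1)=x(P_3)$ and $x(4P_1)=x(P_5)$, which differ from $x(\phi_2(P_1))=\z_3 x_1$, so $\phi_2(P_1)\neq 2P_1, 4P_1$ directly (and similarly for $P_2,\dots,P_6$); it identifies $P_7,P_8$ as the exceptional points only via the Weil-pairing observation that $\z_7\notin K(x(P_j),\z_3,y(P_j))$ for $j=7,8$. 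You instead work from the eigenspace decomposition $\E_2[7]=\E_2[\pi]\oplus\E_2[\bar\pi]$ and count: there are exactly $12$ exceptional points, each eigenspace's six nontrivial points share a single value of $x^3/c$, so exactly two of the $\delta_j$ are exceptional, and since this two-element set is Galois-stable while $\{\delta_1,\dots,\delta_6\}$ is a single Galois orbit (none of them lying in $\QQ(\z_3)$, unlike $\delta_7,\delta_8$), the exceptional pair must be $\{\delta_7,\delta_8\}$. Spelled out this way, your orbit argument actually renders the ``direct group-law computation on $P_7$ and $P_8$'' that you defer to unnecessary, and it replaces the paper's AXIOM verification with a purely structural one; as written, though, you hedge (``one has to check'', ``consistent with'') rather than assert the orbit argument as the proof, so you should commit to it explicitly to close the step.
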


\begin{proof} 
	We have already observed at the beginning of this section that if $\phi_2(P)$ is a $7$-torsion point that is not a multiple of $P$,
	then a basis for $\E_2[7]$ is given by $\{P,\phi_2(P)\}$ and $K_7=K(x(P),\z_3,y(P))$.  However, in some cases the point $\phi_2(P)$ is a multiple of $P$. This happens for the points $P_j$ and $\phi_2(P_j)$, when $j=7,8$
	(in fact if $\z_7\notin K$, then $\z_7\notin K(x(P_j),\z_3,y(P_j))$, 
	for $j=7,8$, contradicting the well-known property of the Weil pairing recalled above). By the use of a software of computational algebra (we used AXIOM again), one can verify that $x(2P_1)=x(P_3)$ (i.e. $2P_1=P_3$ or $2P_1=-P_3$) and $x(4P_1)=x(P_5)$ (i.e. $4P_1=P_5$ or $4P_1=-P_5$).
Suppose that $\phi_2(P_1) = n P_1$. Since $\phi_2^2(P_1)=-P_1-\phi_2(P_1)$, we have $(n^2+n+1)P_1 = O$. Thus $n$ is a root of $n^2+n+1$ modulo $7$, hence $n \equiv 2,4 \hspace{0.1cm} (\modn 7)$. But, as noticed above, we have $x(2P_1) \neq x(\phi_2(P_1))$ and $x(4P_1) \neq x(\phi_2(P_1))$.
Thus $\phi_2(P_1)$ and $P_1$ are linearly independent.
Similar arguments apply for $P_3$ and $P_5$. In addition one can verify that
	$x(2P_2)=x(P_4)$ and $x(4P_2)=x(P_6)$ and repeat the arguments for those points too. Therefore
	$P_j$ and $\phi_2(P_j)$ are linearly
	independent for $j=1, ..., 6$ and
	$\{P_j,\phi_2(P_j)\}$ is a basis of $\E_2[7]$, for every $j=1, ..., 6$. Then $K_7=K\left(\sqrt[3]{\delta_jc},\zeta_3,\sqrt{(\delta_j+1)c}\right)$.
	As stated above, by \cite[Theorem 1.1]{BP2} we also have  $K_7=
	K\left(\sqrt[3]{\delta_jc},\zeta_7,\sqrt{(\delta_j+1)c}\right)$. 
\end{proof}

\section{Degrees $[K_7:K]$ for the curves of $\mcF_2$} \label{sub1}

By the results achieved in Theorem \ref{zeta3}, we are going to describe the possible degrees
$[K_7:K]$ for the elliptic curves of the family $\mcF_2$. From now on we will fix the generating set $\{P_1,\phi_2(P_1)\}$ for
$\E_2[7]$. Thus $K_7=K\left(\sqrt[3]{\delta_1c},\zeta_3,\sqrt{(\delta_1+1)c}\right)=K\left(\sqrt[3]{\delta_1c},\zeta_7,\sqrt{(\delta_1+1)c}\right)$.
Clearly all the results
that we are going to show about the degree $[K_7:K]$ and the Galois group ${\Gal}(K_7/K)$ hold as
well for every other generating set $\left\{\sqrt[3]{\delta_jc},\zeta_3,\sqrt{(\delta_j+1)c}\right\}$ or $\left\{\sqrt[3]{\delta_jc},\zeta_7,\sqrt{(\delta_j+1)c}\right\}$  of the extension $K_7/K$,
with $2\leq j\leq 6$.

\begin{theorem} \label{conditions2}
	Let $\E_2: y^2=x^3+c$, with $c\in K$. Let $\delta_1$ be as above. Consider the conditions
	\[
	\begin{array}{lllll}
		& {\bf A.} \hspace{0.3cm} &\z_3\notin K; \hspace{1.5cm}  & \\
		& {\bf B1.} \hspace{0.3cm} &\z_7+\z_7^{-1}\notin K(\z_3);  & {\bf C.} \hspace{0.3cm} &\sqrt[3]{\delta_1c}\notin K(\z_3,\z_7); \\
		& {\bf B2.} \hspace{0.3cm} & \z_7\notin K(\z_3,\z_7+\z_7^{-1});  \hspace{1.5cm} & {\bf D.} \hspace{0.3cm} &\sqrt{(\delta_1+1)c}\notin K(\z_3,\z_7).
	\end{array}
	\]

	The possible degrees of the extension $K_7/K$ are the following
	
	\begin{center}
		\begin{tabular}{|c|c|c|c|}
			\hline
			
			$d$ & {\em holding conditions}  &  $d$ & {\em holding conditions}  \\
			\hline
			{\em 72} & {\bf A}, {\bf B1}, {\bf B2}, {\bf C}, {\bf D} &  {\em 8} &  {\bf A}, {\bf B2}, {\bf D} \\
			\hline
		\begin{minipage}[t]{0.4cm}	\begin{center}\vskip 0.003cm		{\em 36} \vskip 0.003cm	\end{center}\end{minipage}&\begin{minipage}[t]{6cm}	\begin{center}\vskip 0.003cm {\bf B1}, {\bf C} \textrm{and two among } {\bf A}, {\bf B2}, {\bf D}  \vskip 0.003cm	\end{center}\end{minipage}
& \begin{minipage}[t]{0.4cm}	\begin{center}\vskip 0.003cm	 {\em 6}  \vskip 0.003cm	\end{center}\end{minipage} & \begin{minipage}[t]{6cm}\begin{center}\textrm{one between } {\bf B1}, {\bf C} \textrm{and} \\ \textrm{one among } {\bf A}, {\bf B2}, {\bf D}\end{center}\end{minipage}\\
			\hline
			{\em 24} & \textrm{one between } {\bf B1}, {\bf C} and {\bf A}, {\bf B2}, {\bf D}  & {\em 4} & \textrm{two among } {\bf A},  {\bf B2}, {\bf D}   \\ 
			\hline
			{\em 18} & {\bf B1}, {\bf C} \textrm{and one among } {\bf A}, {\bf B2}, {\bf D}   & {\em 3} & \textrm{one between } {\bf B1}, {\bf C}   \\
			\hline
	\begin{minipage}[t]{0.4cm}	\begin{center}\vskip 0.003cm			{\em 12}  \vskip 0.003cm	\end{center}\end{minipage} & \begin{minipage}[t]{6cm}\begin{center}\textrm{one between } {\bf B1}, {\bf C}  \textrm{and} \\  \textrm{two among } {\bf A}, {\bf B2}, {\bf D}\end{center}\end{minipage} & \begin{minipage}[t]{0.4cm}	\begin{center}\vskip 0.003cm	 {\em 2}  \vskip 0.003cm	\end{center}\end{minipage} &\begin{minipage}[t]{6cm}	\begin{center}\vskip 0.003cm	 \textrm{one among } {\bf A},  {\bf B2}, {\bf D}   \vskip 0.003cm	\end{center}\end{minipage} \\
			\hline
			{\em 9} & {\bf B1}, {\bf C} &  {\em 1} & \textrm{no conditions hold}   \\
			\hline
			\multicolumn{4}{c}{  }\\
			
			\multicolumn{4}{c}{{\em Table 2}}
			
		\end{tabular}
	\end{center}
	
\end{theorem}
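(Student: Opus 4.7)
The plan is to mimic the proof of Theorem~\ref{deg1}. I set up the tower
\[
K \subseteq K(\zeta_3) \subseteq K(\zeta_3,\zeta_7+\zeta_7^{-1}) \subseteq K(\zeta_3,\zeta_7) \subseteq K\left(\zeta_3,\zeta_7,\sqrt[3]{\delta_1 c}\right) \subseteq K_7,
\]
and observe that $d:=[K_7:K]$ is the product of the five consecutive indices. Conditions {\bf A}, {\bf B2}, {\bf D} make the first, third, and fifth steps quadratic, while {\bf B1} and {\bf C} make the second and fourth steps extensions of degree $3$. Indeed $[\QQ(\zeta_7+\zeta_7^{-1}):\QQ]=3$, and since $\zeta_3\in K(\zeta_3,\zeta_7)$ the polynomial $x^3-\delta_1 c$ splits as $(x-\alpha)(x-\zeta_3\alpha)(x-\zeta_3^2\alpha)$ over $K(\zeta_3,\zeta_7,\alpha)$, so the fourth step is Galois of degree $1$ or $3$. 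The fifth step has degree $1$ or $2$ according to whether $\sqrt{(\delta_1+1)c}\in K(\zeta_3,\zeta_7,\sqrt[3]{\delta_1 c})$; since the fourth step has odd degree over $K(\zeta_3,\zeta_7)$, this is equivalent to $\sqrt{(\delta_1+1)c}\in K(\zeta_3,\zeta_7)$, i.e.\ to the failure of {\bf D}. Multiplying gives $d\leq 2\cdot 3\cdot 2\cdot 3\cdot 2=72$, with equality exactly when all five conditions hold.

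Next I must ensure that no hidden implication rules out some combinations of the conditions, so that every row of Table~2 is actually realized. The analog of the check carried out in the proof of Theorem~\ref{deg1} is to verify, with a computer algebra system such as AXIOM or SAGE applied to the explicit formulas for $\delta_1$, that neither $\delta_1$ is a cube nor $\delta_1+1$ is a square in $\QQ(\zeta_3,\zeta_7)$. If {\bf A}, {\bf B1}, {\bf B2} all hold then $[K(\zeta_3,\zeta_7):K]=12$, forcing $K\cap\QQ(\zeta_3,\zeta_7)=\QQ$ and linear disjointness of $K$ and $\QQ(\zeta_3,\zeta_7)$ over $\QQ$; the non-cube/non-square property then transfers to $K(\zeta_3,\zeta_7)$, so {\bf C} and {\bf D} reduce to independent conditions on $c$.

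A notable simplification over the $\mcF_1$ setting is that the two generators $\sqrt[3]{\delta_1 c}$ and $\sqrt{(\delta_1+1)c}$ are \emph{not} nested, so there is no analogue of the forced chain ``{\bf E} fails $\Rightarrow$ {\bf D} fails $\Rightarrow$ {\bf C} fails'' encountered in Theorem~\ref{deg1}. Hence any of the $2^5=32$ subsets of $\{{\bf A}, {\bf B1}, {\bf B2}, {\bf C}, {\bf D}\}$ is admissible; the degree equals the product of the contributions $2,3,2,3,2$ restricted to the subset of conditions that hold, and regrouping subsets by this product yields the twelve distinct values displayed in Table~2. The main obstacle is the initial computational verification that $\delta_1$ is not a cube and $\delta_1+1$ is not a square in $\QQ(\zeta_3,\zeta_7)$; once that is settled, the remaining enumeration is routine bookkeeping.
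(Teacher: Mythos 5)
Your proposal is correct and follows essentially the same route as the paper: the paper's proof consists of exactly this tower, the observation that two of the steps have degree dividing $3$ and three have degree dividing $2$, and the multiplication of the indices. Your additional remarks (the odd-degree argument showing that condition \textbf{D}, stated over $K(\z_3,\z_7)$, indeed governs the last step of the tower, and the discussion of why no implications among the conditions arise here, unlike the nested radicals of Theorem~\ref{deg1}) go slightly beyond what the paper writes down but are consistent with it and sound.
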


\begin{proof}
	Consider the tower of extensions
	\[ K\subseteq K(\z_3)\subseteq K\left(\z_3,\z_7+\z_7^{-1}\right) \subseteq K(\z_3,\z_7) \subseteq K\left(\z_3,\z_7,\sqrt[3]{\delta_1c}\right)
	\subseteq K\left(\z_3,\z_7,\sqrt[3]{\delta_1c}, \sqrt{(\delta_1+1)c}\right).\]
	
\noindent 	We have that each of the degrees $\left[K(\z_3):K\left(\z_3,\z_7+\z_7^{-1}\right)\right]$ and $\left[K\left(\z_3,\z_7,\sqrt[3]{\delta_1c}\right):K(\z_3,\z_7)	\right]$ divides 3. In addition each of the degrees $[K(\z_3):K]$, $\left[K(\z_3,\z_7):K\left(\z_3,\z_7+\z_7^{-1}\right)\right]$ and $ \left[K_7: K\left(\z_3,\z_7,\sqrt[3]{\delta_1c}\right)\right]$ divides 2.
The conclusions follow immediately from $[K_7:K]$ being the product of the degrees of the intermediate extensions appearing in the tower.
\end{proof}

\noindent Notice that in this case too we have $[K_7:K]\leq 72< 366=|\GL_2(\Z/7\Z)|$ and the
Galois representation

\[ \rho_{\E_2,7}:{\Gal}(\ov{K}/K) \rightarrow  \GL_2(\Z/7\Z) \]

\noindent is not surjective, in accordance with $\E_2$ having complex multiplication.

\section{Galois groups ${\Gal}(K_7/K)$ for the curves of $\mcF_2$} \label{sub2}

Let $\E_2$ be a curve of the family $\mcF_2$. We are going to show all possible Galois groups $\Gal(K(\E_2[7])/K)$,
with respect to the degrees $d=[K_7:K]\leq 72$.

\begin{theorem} \label{galF2}
Let $K$ be a field with $\chara(K)\neq 2,3$ and let $\E_2$ be an elliptic curve with Weierstrass form
$y^2=x^3+c$, where $c\in K$. Then $\Gal(K_7/K)$ is isomorphic to a subgroup
of $G\simeq \Dic_3\rtimes \Z/6\Z$. In particular, if $[K_7:K]=72$, then $\Gal(K_7/K)\simeq \Dic_3\rtimes \Z/6\Z$.
\end{theorem}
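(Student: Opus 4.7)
The plan is to mirror the structure of the proof of Theorem \ref{galF1}, replacing the CM order $\Z[i]$ by $\Z[\z_3]$ and observing that the key arithmetic input changes: the rational prime $7$ now splits in $\Z[\z_3]$ (since $7\equiv 1\pmod 3$), whereas it was inert in $\Z[i]$. Accordingly, the relevant subgroup of $\SL_2(\Z/7\Z)$ becomes the normalizer of a split Cartan, which has order $12$, rather than of a non-split one, which had order $16$.

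I assume all the conditions of Theorem \ref{conditions2} hold, so that $[K_7:K]=72$, and I set $G=\Gal(K_7/K)$ and $H=\Gal(K_7/K(\z_7))$. By the Weil pairing, every $\sigma\in G$ acts on $\z_7$ as $\sigma(\z_7)=\z_7^{\det(\sigma)}$, so $H$ is the kernel of the determinant homomorphism and $G/H\simeq\Gal(K(\z_7)/K)\simeq\Z/6\Z$. In particular $|H|=12$ and $H\hookrightarrow\SL_2(\Z/7\Z)$. The complex multiplication $\phi_2$ satisfies $\phi_2^2+\phi_2+\Id=0$, and in the basis $\{P_1,\phi_2(P_1)\}$ of $\E_2[7]$ its matrix is the companion matrix
\[
\phi_2=\begin{pmatrix} 0 & -1 \\ 1 & -1 \end{pmatrix},
\]
of order $3$ and determinant $1$; since $x^2+x+1$ splits over $\FF_7$ (with roots $2$ and $4$), $\phi_2$ is diagonalizable and $\langle\phi_2\rangle$ is conjugate into a split Cartan of $\SL_2(\Z/7\Z)$.

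The endomorphism $\phi_2$ is defined over $K(\z_3)$, so for any $\sigma\in G$ with $\sigma(\z_3)=\z_3^{-1}$ a direct calculation using $\phi_2((x,y))=(\z_3 x,y)$ gives $\sigma\phi_2\sigma^{-1}=\phi_2^{-1}$; condition {\bf A} guarantees that such $\sigma$ exists. Consequently $H$ is contained in the normalizer of $\langle\phi_2\rangle$ in $\SL_2(\Z/7\Z)$, which is precisely the normalizer of a split Cartan. One verifies (by writing down generators in a basis of eigenvectors of $\phi_2$) that this normalizer has order $12$ and admits the presentation $\langle a,b\mid a^6=\Id,\; b^2=a^3=-\Id,\; bab^{-1}=a^{-1}\rangle$, hence is isomorphic to $\Dic_3$. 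Since $|H|=12$, I conclude $H\simeq\Dic_3$.

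For the full group, the splitting $\GL_2(\Z/7\Z)\simeq\SL_2(\Z/7\Z)\rtimes\FF_7^*$ together with a lift $\psi_2\in G$ of a generator of $G/H\simeq\Z/6\Z$ (corresponding to $\sigma_2:\z_7\mapsto\z_7^5$) yields $G\simeq H\rtimes(G/H)\simeq\Dic_3\rtimes\Z/6\Z$. Exactly as in the closing argument of Theorem \ref{galF1}, a short matrix computation comparing $\psi_2\phi_2$ with $\phi_2\psi_2$, using $\det\psi_2\equiv 5\pmod 7$ and the explicit form of elements of the normalizer, rules out $\psi_2$ commuting with $\phi_2$, so the semidirect product is not direct. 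If instead $[K_7:K]<72$, then $G$ injects as a proper subgroup of $\Dic_3\rtimes\Z/6\Z$. The main obstacle I anticipate is verifying that $H$ really exhausts the full normalizer of the split Cartan and not merely the Cartan itself: this requires certifying that the outer involution of the normalizer is realised by a Galois automorphism, which is exactly the content of condition {\bf A}, mirroring its role in the identification $H\simeq Q_{16}$ in Theorem \ref{galF1}, with the structural difference that the splitting of $7$ in $\Z[\z_3]$ replaces a non-split Cartan by a split one.
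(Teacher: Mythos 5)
Your identification of $H$ is correct and proceeds by a genuinely different (and cleaner) route than the paper: you observe that every element of $G$ normalizes $\langle\phi_2\rangle$ (elements fixing $\z_3$ centralize the CM, elements inverting $\z_3$ conjugate $\phi_2$ to $\phi_2^{-1}$), so $H$ sits inside the normalizer in $\SL_2(\Z/7\Z)$ of the split Cartan attached to $\Z[\z_3]/7$, which has order $12\,$ and is $\Dic_3$; since $|H|=12$, equality follows. The paper instead shows by hand that $H$ is a non-abelian group of order $12$ containing $\Z/6\Z$ and then eliminates $\D_{12}$ by a determinant computation. Your counting argument also disposes of your own worry about whether $H$ exhausts the full normalizer: the Cartan meets $\SL_2$ in a group of order $6$, so an order-$12$ subgroup of the normalizer cannot lie in the Cartan.

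However, the last step contains a genuine error. You claim that a matrix computation using $\det\psi_2\equiv 5\pmod 7$ ``rules out $\psi_2$ commuting with $\phi_2$,'' mirroring the closing argument for $\mcF_1$. This is false, and it contradicts your own earlier observation: $\psi_2$ is a lift of $\sigma_2:\z_7\mapsto\z_7^5$, which fixes $\z_3$, so $\psi_2$ centralizes $\phi_2$ and lies in the split Cartan. Concretely, a diagonal matrix $\mathrm{diag}(a,b)$ with $ab\equiv 5$ and order $6$ exists (e.g.\ $\mathrm{diag}(3,4)$), so no contradiction can arise from $\det\psi_2\equiv 5$ alone; the paper in fact \emph{proves} that $\psi_2$ and $\phi_2$ commute and uses this to pin down $\psi_2=\pm\left(\begin{smallmatrix}2&3\\-3&-2\end{smallmatrix}\right)$. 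The situation is structurally different from $\mcF_1$, where $7$ is inert in $\Z[i]$ and the Cartan is non-split: there an element of determinant $5$ and order $6$ genuinely cannot lie in the centralizer of $\phi_1$, which is why the $\mcF_1$ argument cannot be transplanted. To show the semidirect product is not direct you must exhibit non-commutation of $\psi_2$ with the \emph{anti-diagonal} part of $H$ (the elements inverting $\z_3$): the paper does this with the element $\tilde{\rho}\in H$ induced by $\z_3\mapsto\z_3^2$, showing that $\tilde{\rho}\psi_2=\psi_2\tilde{\rho}$ would force $\det\tilde{\rho}=-1$, contradicting $\tilde{\rho}\in\SL_2(\Z/7\Z)$. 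Equivalently, in your framework: $\psi_2$ is a non-scalar element of the split Cartan (scalar is impossible since $5$ is not a square mod $7$), hence cannot commute with the elements of $N(C)$ that swap the two eigenlines. As written, your final paragraph asserts a false statement and the non-directness is left unproved.
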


\begin{proof}
Suppose that all the conditions in Theorem \ref{conditions2} hold, so that $[K_7:K]=72$. The image of $\Gal(\overline{K}/K)$ via the Galois representation $\rho_{\E_2,7}$ is a subgroup of $\GL_2(\Z/7\Z)$ isomorphic to $G=\Gal(K_7/K)$. As for the family $\mcF_1$, we denote by $G$ both $\Gal(K_7/K)$ and its image in $\GL_2(\Z/7\Z)$. Consider the tower of extensions in Figure \ref{fig:tower}. 
We denote by $H$ both $\Gal(K_7/K(\z_7))$ and its image in $\GL_2(\Z/7\Z)$.
The Galois group $\Gal(K(\z_7)/K)$ is then isomorphic to the quotient $G/H$. The group $H$ has order 12, because of $\Gal(K(\z_7)/K) \simeq \Z/6\Z$.  Since the action of $\sigma \in \GL_2(\Z/7\Z)$ on $\z_7$ is given by $\sigma (\z_7): = \z_7^{\det\left(\sigma\right)}$, then $\det\left(\sigma\right)=1$, for every $\sigma \in H$. Thus $H$ is indeed a subgroup of $\SL_2(\Z/7\Z)$. We are going to describe $H$ up to isomorphism. For every positive integer $n$, we denote by $\D_{2n}$ the dihedral group of order $2n$. Since we are assuming that all the conditions in Theorem \ref{conditions2} hold, then the complex multiplication $\phi_2$ and $-\Id$ are automorphisms of $H$.
The complex multiplication $\phi_2$ has order 3 and
 acts on the basis $\{P_1,\phi_2(P_1)\}$ as
	$$P_1 \xmapsto{\phi_2} \phi_2(P_1), \quad \phi_2(P_1) \xmapsto{\phi_2} \phi_2^2(P_1).$$

\noindent	Since $\phi_2^2(P_1)= -P_1 - \phi_2(P_1)$, then we can represent $\phi_2$ in $\GL_2(\Z/7\Z)$ as
	\[ \phi_2 = \left(\begin{array}{cc}
		0 & -1 \\
		1 & -1 \\
	\end{array}\right).	
	\]
	\noindent Then the inverse of $\phi_2$ is represented by the matrix
	\[ \phi_2^{-1}=\phi_2^2 = 
		\left(\begin{array}{cc}
			-1 & 1 \\
			-1 & 0 \\
		\end{array}\right).
	\]
	
\noindent The automorphism $-\Id$, swapping the ordinates	$P \overset{-\Id}{\longmapsto}-P$, for every $P\in \E_2[7]$,
	corresponds to the automorphism of $K_7/K$ that maps $\sqrt{(\delta_j+1)c}$ to $-\sqrt{(\delta_j+1)c}$, for all  $1\leq j\leq 8$.
Clearly $\phi_2$ and $-\Id$ commute, so $H$ has a subgroup isomorphic to $\Z/3\Z \times \Z/2\Z \simeq \Z/6\Z$.
We are going to show that $H$ is not abelian. Suppose that $H$ is abelian. Then it is isomorphic to either $\Z/3\Z\times \Z/4\Z$ or $\Z/3\Z \times \left(\Z/2\Z\right)^2$. Let $\sigma \in H$. Since $\sigma$ commutes with $\phi_2$, one gets that

$$\sigma=\left(\begin{array}{cc}
	\a & -\b\\
	\b & \a-\b\\
\end{array}\right),$$ 

\noindent for some $\a,\b\in \Z/7\Z$. We have already observed that $-\Id\in H$. 
If $H \simeq \Z/3\Z\times \Z/4\Z$, then there exists $\sigma \in H$ such that $\sigma^4 = \Id$ and $\sigma^2 = -\Id$, i.e.
\[
\sigma^2 =\left(\begin{array}{cc}
	\a^2-\b^2 & \b^2-2\a\b\\
	2\a\b-\b^2 & \a^2-2\a\b\\
\end{array}\right)
\equiv \left(\begin{array}{cc}
	-1 & 0 \\
	0 & -1 \\
\end{array}\right) (\modn 7).
\]

\noindent The congruence $\b^2-2\a\b\equiv 0 \hspace{0.1cm} (\modn 7)$ implies $\b = 0$ or $\b = 2\a$. If $\b=0$, then  $\a^2\equiv -1  \hspace{0.1cm} (\modn 7)$, which has no solutions. If $\b = 2\a$, then $-3\a^2 \equiv -1  \hspace{0.1cm} (\modn 7)$,
which has no solutions
as well. 
On the other hand, if $H \simeq \Z/3\Z \times \left(\Z/2\Z\right)^2$, then there exists $\sigma \in H$ such that $\sigma^2 = \Id$ and $\sigma \neq \pm\Id$. By
\[
\sigma^2 =\left(\begin{array}{cc}
	\a^2-\b^2 & \b^2-2\a\b\\
	2\a\b-\b^2 &\a^2-2\a\b\\
\end{array}\right)
\equiv \left(\begin{array}{cc}
	1 & 0 \\
	0 & 1 \\
\end{array}\right) (\modn 7),
\]

\noindent we get again $\b=0$ or $\b = 2\a$. If  $\b=0$, then $\sigma = \pm \Id$ and we have a contradiction. Suppose $\b = 2\a$. By $\det(\sigma)\equiv 1 \hspace{0.1cm} (\modn 7)$ we get $\alpha^2 \equiv -2\hspace{0.1cm} (\modn 7)$, which has no solutions. Therefore $H$ is not abelian, as claimed. In addition $H$ is a group of order 12, with  a subgroup isomorphic to $\Z/6\Z$. We have that
either $H \simeq \D_{12}$ or $H\simeq \Dic_{3}$, where $\Dic_3$ is the dicyclic group of order 12 (which is isomorphic to $\Z/3\Z \rtimes \Z/4\Z$). 
Suppose that $H \simeq \D_{12}$. We also have $H \simeq \D_6 \times \Z/2\Z$. Then $H$ is generated by $-\Id$,
$\phi_2$ and another automorphism $\tau$ of order $2$ such that $\phi_2\tau = \tau \phi_2^{-1}$
(i.e. $\langle \phi_2, \tau\rangle\simeq \D_6$ and $H=\langle -\Id\rangle \times \langle \phi_2, \tau\rangle\simeq \Z/2\Z\times \D_6\simeq \D_{12}$). The relation $\phi_2\tau = \tau \phi_2^{-1}$ implies that $\tau$ is represented by a matrix of the form
\[\tau= \left(\begin{array}{cc}
	\a & \b\\
	\a+\b & -\a\\
\end{array}\right),\]
\noindent for some $\a,\b \in \Z/7\Z$.
Since $\tau$ has order $2$, then

\[
\tau^2 =\left(\begin{array}{cc}
	\a^2+\b^2+\a\b & 0\\
	0 &\a^2+\b^2+\a\b\\
\end{array}\right)
\equiv \left(\begin{array}{cc}
	1 & 0 \\
	0 & 1 \\
\end{array}\right) (\modn 7),
\]

\noindent i.e.\ $\a^2+\b^2+\a\b \equiv 1 \hspace{0.1cm} (\modn 7)$. On the other hand, since $\det(\tau)\equiv 1 \hspace{0.1cm} (\modn 7)$, then $\a^2+\b^2+\a\b \equiv -1 \hspace{0.1cm} (\modn 7)$ and we have a contradiction.
Therefore the group $H$ is isomorphic to $\Dic_3\simeq \ZZ/3\ZZ\rtimes \ZZ/4\ZZ$ and it is generated by the complex multiplication $\phi_2$ of order 3 and an automorphism $\tau_2$ of order $4$, such that 

\[ H=\la \phi_2, \tau_2 \mid \phi_2^3 = \tau_2^4 = 1, \, \phi_2\tau_2 = \tau_2\phi_2^{-1} \ra. \]

\noindent In addition $\tau_2^2=-\Id$. Since $\GL_2(\Z/7\Z)\simeq \SL_2(7) \rtimes  \FF_7^*\simeq \SL_2(7) \rtimes \Z/6\Z$, then $G\simeq \Dic_3\rtimes \Z/6\Z$. For completeness we are going to show that this last semidirect product is not a direct product (as in the case of the family $\mcF_1$). 
The group $G/H$ is generated by an automorphism $\psi_2$ of order 6 corresponding to the automorphism $\sigma_2$ of $\Gal(\Q(\z_7)/\Q)$ mapping $\z_7$ to $\z_7^{5}$. As stated above, we have

\[
\delta_1 \xmapsto{\sigma_2} \delta_2 \xmapsto{\sigma_2}  \delta_3 \xmapsto{\sigma_2}  \delta_4 \xmapsto{\sigma_2}  \delta_5 \xmapsto{\sigma_2}  \delta_6 \xmapsto{\sigma_2}  \delta_1.
\]

\noindent Then $\psi_2(P_1)$ is one of the points $\pm \phi_2^n(P_2)$, with $n\in \{0,1,2\}$ (where $\phi_2^0=\Id$). Since
$\phi_2(x_1)=\z_3x_1$, then $\psi_2(\phi_2(P_1))$ is one of the points $\pm \phi_2^{n+1}(P_2)=\pm \phi_2(\psi_2(P_1))$. 
Suppose that $\psi_2(P_1) =\alpha P_1 + \beta \phi_2(P_1)$, for some $\alpha, \beta\in \Z/7\Z$. Then
$\psi_2(\phi_2(P_1)) =\pm(\beta P_1 + (\beta-\alpha) \phi_2(P_1))$. Thus

$$\psi_2=\left(\begin{array}{cc}  \alpha & -\beta \\ \beta &-\beta+\alpha \end{array}\right) \quad\text{or}\quad \psi_2=\left(\begin{array}{cc}  \alpha & \beta \\ \beta & \beta-\alpha \end{array}\right). $$

\noindent Only in the first case $\psi_2$ and $\phi_2$ commute. By \cite[Chapter II, Theorem 2.3]{Sil2}, the extension  $K_7/K(\z_3)$ is abelian.\ If all the conditions in Theorem \ref{conditions2} hold, then the complex multiplication $\phi_2$ and $\psi_2$ are automorphisms of
$\Gal(K_7/K(\z_3))$. Therefore $\phi_2$ and $\psi_2$ commute and

$$\psi_2=\left(\begin{array}{cc}  \alpha & -\beta \\ \beta &-\beta+\alpha \end{array}\right).$$

\noindent Observe that $\psi_2^2$ maps $P_1$ to $\phi_2^j(P_3)$, for some $j\in\{0,1,2\}$ and $\phi_2(P_1)$ to $\phi_2^{j+1}(P_3)$. As noted in the proof of Theorem \ref{zeta3}, we have $x(P_3) = x(2P_1)$, i.e. $P_3 = 2P_1$ or $P_3=-2P_1$. We also have
 $x(\phi_2(P_3)) = x(2\phi_2(P_1))=\phi_2(x(2P_1))$. Therefore the automorphisms $\psi_2^2$ is equal to $\phi_2^j\omega$, where $\omega$ is represented by one of the following matrices

\[ \omega = \begin{pmatrix}
	2 & 0 \\
	0 & 2 \\
\end{pmatrix}
\quad \text{or} \quad
\omega = \begin{pmatrix}
	-2 & 0 \\
	0 & -2 \\
\end{pmatrix}.
\]

\noindent The second case is not possible, since $\psi_2^2$ would not have order 3.
Therefore $\psi_2^2=\phi_2^j\omega$, with

\[ \omega = \begin{pmatrix}
	2 & 0 \\
	0 & 2 \\
\end{pmatrix}. \]

\noindent Since $\psi_2^2\in G/H$ and $\phi_2^j\in H$, we may assume without loss of generality
that $\psi_2^2=\omega$, by eventually changing the representative of the class $\psi_2^2$ in $G/H$.
Observe that then $\psi_2^2$ commutes with every other automorphism of $G$. Furthermore, we have

\[
\psi_2^2 = \begin{pmatrix}
	\a^2-\b^2 & \b^2-2\a\b \\
	2\a\b-\b^2 & \a^2-2\a\b \\
\end{pmatrix} \equiv 
\begin{pmatrix}
	2 & 0 \\
	0 & 2 \\
\end{pmatrix} (\modn 7),
\]

\noindent Thus $\b(\b-2\a) = 0$, implying $\b = 0$ or $\b = 2\a$. If $\b = 0$, then $\det(\psi_2) = \a^2 \equiv 5 \hspace{0.1cm} (\modn 7)$, which has no solution (recall that $\psi_2(\z_7)=\z_7^5=\z_7^{\det(\psi_2)}$). So $\b = 2\a$ and $-3\a^2\equiv 2\hspace{0.1cm} (\modn 7)$, i.e. 
$\a \equiv 2 \hspace{0.1cm} (\modn 7)$ or $\a \equiv -2 \hspace{0.1cm} (\modn 7)$. Thus 

\[
\psi_2 = \begin{pmatrix}
	2 & 3 \\
	-3 & -2\\
\end{pmatrix} \quad \text{or} \quad
\psi_2 = \begin{pmatrix}
	-2 & -3 \\
	3 & 2 \\
\end{pmatrix}
= -\begin{pmatrix}
2 & 3 \\
-3 & -2\\
\end{pmatrix}.
\]

\noindent Again, by eventually change the representative of the class of $\psi_2$ in $G/H$, we may assume without
loss of generality that

$$\psi_2=\begin{pmatrix}
	-2 & -3 \\
	3 & 2\\
\end{pmatrix}.$$

\noindent We consider an automorphism $\rho\in H$ induced by the automorphism of $\Gal(K_7/K)$ mapping $\z_3$ to $\z_3^2$.
Thus $\rho$ maps $P_1$ to $\phi_2^j(P_4)$, for some $j\in \{0,1,2\}$ and we have that
there exists a power $\phi_2^s$ of $\phi_2$, with $s \in \{0,1,2\}$ such that $j+s \equiv 0 \mod 3$. We call $\tilde{\rho}$ the product $\phi_2^s\rho$ and we have that it maps $P_1$ to $P_4$. Since $\psi_2^3$ also maps $P_1$ to $P_4$ and

$$\psi_2^3=\begin{pmatrix}
	3 & 1 \\
	-1 & 4\\
\end{pmatrix},$$

\noindent  then we have

$$\tilde{\rho}=\left(\begin{array}{cc}
	3 & \alpha\\
	-1& \beta
\end{array}\right),$$

\noindent for some $\alpha,\beta\in \Z/7\Z$. Thus

$$\tilde{\rho}\psi_2-\psi_2\tilde{\rho}=\left(\begin{array}{cc}
	3\a-3 & 3\b+4\a-2\\
	3\b-5&-3\a+3
\end{array}\right).$$

\noindent Therefore $\tilde{\rho}$ and $\psi_2$ commute if and only if $\a\equiv 1\hspace{0.1cm} (\modn 7)$ and
$\b\equiv -3\hspace{0.1cm} (\modn 7)$. But then $\det(\tilde{\rho})=-1$ and we would have a contraddiction
with $\rho\in H$. Therefore $G\simeq \Dic_3\rtimes \Z/6\Z$ with
$G\neq \Dic_3\times \Z/6\Z$. For every $d=[K_7:K]<72$, we have that $G$ is
isomorphic to a proper subgroup of $\Dic_3\rtimes \Z/6\Z$.
\end{proof}

Observe that the situation for the Galois groups of the family $\mcF_2$ is similar to that of the Galois groups of the family $\mcF_1$; in fact for the curves in $\mcF_1$
have that $G\simeq \Dic_4\rtimes \Z/6\Z$, since the dicyclic group  $\Dic_4$ of order 16 is nothing but the
quaternion group $Q_{16}$.\par

\bigskip  We are going to describe the possible Galois groups $G=\Gal(K(\E_2[7])/K)$ when $d\leq 72$. By the proof of Theorem \ref{galF2}, we notice that $\psi_2^3=2\psi_2$ does not commute with $\tilde{\rho}$ too.
In addition we deduce that  $\tau_2$ and $\psi_2$ do not commute (otherwise we would get $G\simeq \Dic_3\times \Z/6\Z$) and that
$\tau_2$ and $\psi_2^3$ do not commute as well. Recall that by the mentioned \cite[Chapter II, Theorem 2.3]{Sil2}, we have that $G$
is abelian whenever {\bf A} does not hold.  Recall also that every nontrivial proper subgroup of $\Dic_3\simeq \Z/3\Z\rtimes \Z/4\Z$ is isomorphic to $\Z/m\Z$, with $m \in \{2,3,4,6\}$. 
In particular if {\bf D} does not hold, then $H$ is abelian.  If {\bf D} does not hold and $\phi_2\in H$, then we have that
every other automorphism of $H$ commutes with $\phi_2$ and it is then represented by a matrix of the form 

$$\begin{pmatrix}
	\a & -\b\\
	\b & \a-\b\\
\end{pmatrix},$$ 

\noindent for some $\a$ and $\b$ in $\Z/7\Z$. Every matrix of this type commutes with $\psi_2$ too.
Therefore, if {\bf D} does not hold, then $G$ is abelian as well.

\bigskip \emph{Galois groups $\Gal(K(\E_2[7])/K)$}

\begin{description}
	\item[$d=72$]
	\par If the degree $d$ of the extension $K_7/K$ is 72, then all the conditions hold. We have proved in Theorem \ref{galF2}
that	in this case $G\simeq \Dic_3\rtimes \Z/6\Z$.
	
	\item[$d=36$]
	If the degree $d$ of the extension $K_7/K$ is 36, then condition {\bf B1} and condition {\bf C}	 hold.
 
	\begin{description}
\item[-] If one between {\bf A} and  {\bf D} does not hold, then we have an abelian group. Since both condition {\bf B1} and condition {\bf B2} hold, then $G/H \simeq \ZZ/6\ZZ$ and thus $G\simeq \Z/3\Z\times \Z/2\Z \times \Z/6\Z\simeq (\Z/2\Z)^2\times (\Z/3\Z)^2$. 
	\item[-]  If {\bf B2} does not hold, then $G/H\simeq \Z/3\Z$ and $G=\langle \phi_2, \tau_2, \psi_2^2\rangle$. Since $\psi_2^2$ is represented by a diagonal matrix and commutes with every other automorphism, then $G\simeq \Dic_3\times \Z/3\Z$.
		\end{description}

	\item[$d=24$] 
	Only one between condition {\bf B1} and condition {\bf C} hold and all the other conditions hold.
	
	\begin{description}
\item[-] If {\bf C} does not hold, then	 $G=\langle  \tau_2, \psi_2 \rangle\simeq \Z/4\Z \rtimes \Z/6\Z\simeq \D_8 \times \Z/3\Z$.
	\item[-] If {\bf B1} does not hold, then $G/H\simeq \Z/2\Z=\langle \psi_2^3\rangle$. Thus $G\simeq \Dic_3 \rtimes \Z/2\Z$.
	\end{description}

	\item[$d=18$]
	Conditions {\bf B1} and {\bf C} hold and the automorphism $\phi_2$ has order 3. Since only one among the
other conditions holds, then
at least one between {\bf A} or {\bf D} does not hold and $G$ is abelian.
	\begin{description}
\item[-] If either {\bf A} or {\bf D}  holds, then $G \simeq (\Z/3\Z)^2 \times \Z/2\Z$.
	\item[-]  If both {\bf A} and {\bf D} do not hold, then {\bf B2} holds. We have $G/H \simeq \langle \psi_2\rangle \simeq \Z/6\Z$ and $H\simeq\langle \phi_2\rangle\simeq \Z/3\Z$. Since $\phi_2$ and $\psi_2$ commutes, then $G \simeq (\Z/3\Z)^2 \times \Z/2\Z$ as well.
\end{description}

	\item[$d=12$]
	Only one between conditions {\bf B1} and {\bf C} holds and two among the other conditions hold.
	\begin{description}
\item[-] If  both {\bf B1} and {\bf B2} do not hold, then the extension $G/H$ is trivial and
	$G=H\simeq\Dic_3 \simeq \Z/3\Z \rtimes \Z/4\Z$.  
	\item[-] If {\bf B1} does not hold and {\bf B2} holds, then  $H \simeq\Z/2\Z$ and $G/H\simeq \Z/6\Z$. One between {\bf A} and  {\bf D} does not hold. Then the extension $K_7/K$ is abelian with Galois group $G \simeq \Z/3\Z \times (\Z/2\Z)^2$.
\item[-] If {\bf B1} holds and {\bf B2} does not hold, then $G/H$ is generated by $\psi_2^2$ and $H = \langle \tau_2 \rangle \simeq \Z/4\Z$. Since $\psi_2^2$ commutes with $\tau_2$, then the Galois group $G$ is isomorphic to $\Z/4\Z \times \Z/3\Z$.
	\item[-] If both {\bf B1} and {\bf B2} hold, then $G/H\simeq \Z/6\Z$ and $H \simeq \Z/2\Z$. We have that  
one between  {\bf A} and {\bf D} does not hold. Hence $K_7/K$ is an abelian extension with Galois group $G \simeq \Z/3\Z \times (\Z/2\Z)^2$.
	\end{description}

	\item[$d=9$]
	The only holding conditions are {\bf B1} and {\bf C}. Then $H=\langle \phi_2\rangle \simeq \Z/3\Z$ and $G/H\simeq \Z/3\Z$. We have $G\simeq \ZZ/3\ZZ\times \Z/3\Z$.

	\item[$d=8$]
	If  the degree $d$ of the extension $K_7/K$ is 8, then all the conditions hold but {\bf B1} and {\bf C}. Thus $H=\langle \tau_2\rangle\simeq \Z/4\Z$ and $G/H=\langle \psi_2^3\rangle \simeq \Z/2\Z$. We have observed that
$\tau_2$ and $\psi_2^3$ do not commute. Then $G\simeq \Z/4\Z\rtimes \Z/2\Z\simeq \D_8$.

	\item[$d=6$]
	If the degree $d$ of the extension $K_7/K$ is 6, then either {\bf B1} or {\bf C} holds and one among the other condition holds. In all cases the group $G$ is isomorphic to $\Z/6\Z \simeq \ZZ/3\ZZ\times \Z/2\Z$.
	
	\item[$d=4$]
	If  the degree $d$ of the extension $K_7/K$ is 4, then both {\bf B1} and {\bf C} do not hold. 
	\begin{description}
\item[-]  If {\bf B2} does not hold, then $G/H$ is trivial and $G = H = \la \tau_2 \ra \simeq \ZZ/4\ZZ$.
	\item[-] If {\bf B2} holds, then $G/H\simeq \Z/2\Z$ and $G$ is isomorphic to the Klein group  $\ZZ/2\ZZ \times \ZZ/2\ZZ$. 
		\end{description}

	\item[$d\leq 3$]
	If the degree $d$ of the extension $K_7/K$ is 3 or 2 or 1, obviously the Galois group is respectively $\Z/3\Z$,  $\Z/2\Z$ or $\{{\rm Id}\}$.

\end{description}

\section{Some applications} \label{lastsub}

As mentioned in Section \ref{sec1}, we are going to describe some applications of the results produced in the
previous sections.

\subsection{A minimal bound for the local-global divisibility by $7$} \label{loc-glob}

The first application concerns the following local-global question that was stated in \cite{DZ1} by R. Dvornicich and 
U. Zannier as a generalization of a particular case of the famous Hasse principle on quadratic forms (for further details one can see  \cite{DZ3}, \cite{Cre2}, \cite{GR2}, \cite{PRV2} and \cite{Pal_2019} among others;
Dvornicich and the corresponding author also produced a survey \cite{DP} about this topic).

\begin{Problem}[Dvornicich, Zannier, 2001] \label{prob1}
	Let $K$ be a number field, $M_K$ the set of the places $v$ of $K$ and $K_v$ the completion of $K$ at $v$.
	Let $\G$ be a commutative algebraic group defined over $K$. Fix a positive integer $m$ 
	and assume that there exists a $K$-rational point $P$ in $\G$, such that  $P=mD_v$, for some $D_v\in {\mathcal{G}}(K_v)$,
	for all but finitely many  $v\in M_K$. Does there exist $D\in \G(K)$ such that $P=mD$?
\end{Problem}

\noindent  We have stated the question in its original form, for all commutative algebraic groups, but from here on out
we will confine the discussion to elliptic curves $\E$ over $K$. It is a common method 
in local-global questions to translate the problem in a cohomological question. 
Dvornicich and  Zannier stated the following definition of a subgroup of
$H^1(G,\E[m])$ which encodes the hypotheses of the
problem and whose triviality assures the validity of the local-global divisibility by $m$ 
in $\E$ over $K$ \cite[Proposition 2.1]{DZ1}:

\begin{equation} \label{h1loc}
	H^1_{\loc}(G,\E[m]):=\bigcap_{v\in \Sigma} (\ker  H^1(G,\E[m])\xrightarrow{\makebox[1cm]{{\small $res_v$}}} H^1(G_v,\E[m])),
\end{equation}

\noindent where $\Sigma$ is the set of places of $K$ unramified in $K(\E[m])$ and $res_v$ is the usual restriction map.
  
\par\bigskip\noindent The group $H^1_{\loc}(G,\E[m])$  is called \emph{first local cohomology group} and 
gives an obstruction to the validity of this Hasse principle for divisibility of points by $m$ in $\E$
over a finite extensions of $K$ linearly disjoint from
$K(\E[m])$ \cite[Theorem 3]{DZ3}.
Since every $v\in \Sigma$ is unramified in $K(\E[m])$,  then $G_v$ is a cyclic subgroup of $G$, for all $v\in \Sigma$.
By the Chebotarev Density Theorem, the local Galois group $G_v$ varies over \emph{all} cyclic subgroups of
$G$ as $v$ varies in $\Sigma$. 
Observe that indeed it suffices to take

\begin{equation} \label{def_S}
	H^1_{\loc}(G,\E[m])=\bigcap_{v\in S} (\ker  H^1(G,\E[m])\xrightarrow{\makebox[1cm]{{\small $res_v$}}} H^1(G_v,\E[m])),
\end{equation}

\noindent with $S$ a subset of $\Sigma$ such that $G_v$ varies
over all cyclic subgroups of $G$ as $v$ varies in $S$. Observe that in particular we can choose a finite set $S$ (on the contrary $\Sigma$ is not finite).
In \cite{DZ1} the authors showed that the local-global divisibility by a prime number $p$ holds in $\E$ over $K$
(this was also proved in \cite[Theorem 1]{Won} and a very similar statement was proved in \cite[Lemma 6.1 and its corollary]{Cas4} and \cite[Theorem 8.1]{Cas3}). In particular,  the local-global divisibility by $7$ holds in $\E$ over $K$. 
Thus, if we are able to find such
a set $S$ and prove that the local divisibility by $7$ holds for $P\in\E(K)$, for all $v\in S$, then
we get that $P$ is globally divisible by $7$, i.e.\ that $P$ has a $K$-rational $7$-divisor. So it suffices to have the local divisibility by $7$ for a finite number of suitable places to get the global divisibility by $7$. 
In \cite{DP2} R. Dvornicich and the corresponding author produced an explicit effective version of the
hypotheses of Problem \ref{prob1} in all elliptic curves over number fields, by producing
an explicit finite set $S$, for every positive integer $m$ and every elliptic curve $\E$. Such an effective version
is given by an upper bound $B(m,\E)$ (depending on $m$ and $\E$) to the places of $K$ unramified in $K_m$,
such that the validity of the local divisibility for all places less than $B(m,\E)$ assures the global divisibility (in the cases when the Hasse principle for
divisibility of points holds in $\E$ over $K$). With such a bound it is not necessary to take into account
the distinctness of the Galois groups $G_v$ in testing the local divisibility, 
since it is already assured by the density of places $v$ that are considered.\ However, for this reason the cardinality of the
set $S$ produced in \cite{DP2} is not as minimal as possible.\ It is indeed a very hard problem to obtain an analogue result with an explicit set $S$ of minimal cardinality (i.e.\ with the assumption that the local Galois groups
$G_v$, corresponding to the places in $S$, are pairwise distinct), for all positive integers $m$. 
It is also a difficult problem just to find the minimal possible cardinality for $S$ for every $m$.
In view of the results achieved for the Galois groups ${\Gal}(K_7/K)$ for the elliptic curves of the families 
$\mcF_1$ and $\mcF_2$, we give an answer to this last question when $m=7$ for the curves of these families
(in \cite{Pal_2018} an answer was given when $m=5$ for the curves of the same families).
For these curves we produce an upper bound to the cardinality of $S$ 
which is surprisingly small and it is as minimal as possible
when the degree $[K_7:K]$
is maximum (i.e. $[K_7:K]=96$ for the curves in $\mcF_1$ and $[K_7:K]=72$ for the curves in $\mcF_2$).
With the description of the Galois groups given in Section \ref{gal1} and Section \ref{sub2} and with
the description of the cyclic subgroups of $G$ given in the proofs of  the following Theorem \ref{appl_1} and Theorem \ref{appl_2},
one can easily deduce the minimal cardinality for $S$, for every $\E_1\in  \mcF_1$ and
$\E_2\in  \mcF_2$.

\begin{theorem} \label{appl_1}
	Let $\E_1$ be an elliptic curve defined over a number field $K$, with Weierstrass 
	equation $y^2=x^2+bx$, for some $b\in K$.
	There exist sets $S\subseteq M_K$ of cardinality $s\leq 18$ such that if $P=7D_v$, with $D_v\in \E_1(K_v)$, for all $v\in S$,
	then $P=7D$, for some $D\in \E_1(K)$. In particular, if $[K_7:K]=96$, then $s=18$.
\end{theorem}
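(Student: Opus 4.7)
The plan is to invoke the Dvornicich--Zannier cohomological framework recalled in Section \ref{loc-glob}. Since the local-global divisibility by $p=7$ is known to hold for elliptic curves over number fields (Wong's theorem, cf.\ \cite{DZ1}), one has $H^1_{\loc}(G,\E_1[7])=0$ for $G=\Gal(K_7/K)$. Consequently, the conclusion $P=7D$ follows from the hypothesis $P=7D_v$ for every $v\in S$, as soon as $S\subseteq\Sigma$ is chosen so that $\bigcap_{v\in S}\ker(\mathrm{res}_{G_v})=0$.

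Two elementary reductions minimise the size $s=|S|$: $G$-conjugate cyclic subgroups yield identical restriction kernels (by naturality of $\mathrm{res}$ with respect to inner automorphisms), and $\ker(\mathrm{res}_{C'})\subseteq\ker(\mathrm{res}_C)$ whenever $C\subseteq C'$ are cyclic. Thus a valid $S$ is obtained by choosing, for each $G$-conjugacy class of maximal cyclic subgroup of $G$, one place $v\in\Sigma$ with $G_v$ in that class, existence being guaranteed by Chebotarev density. The minimum valid $s$ is then bounded by the number of such conjugacy classes.

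For the extreme case $[K_7:K]=96$ and $G\simeq Q_{16}\rtimes\Z/6\Z$, I would use the matrix presentation in the proof of Theorem \ref{galF1}. Since $\psi_1^2\equiv 2\cdot\Id\pmod{7}$ is a scalar matrix, it is central in $G$ of order $3$, and coprimality of $3$ with the order $32$ of the Sylow-$2$ subgroup $N:=H\cdot\langle\psi_1^3\rangle$ yields $G\simeq N\times\langle\psi_1^2\rangle$, with $N\cong Q_{16}\rtimes\Z/2\Z$ where $\psi_1^3$ fixes $\varphi_1$ and inverts $\phi_1$. Cyclic subgroups of the direct product split as products, so the enumeration reduces to maximal cyclic subgroups of $N$. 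Writing elements of $N$ in the normal form $\varphi_1^i\phi_1^k(\psi_1^3)^\ell$ and using the above relations produces the list of maximal cyclic subgroups together with their $N$-conjugacy orbits; in particular one finds a family $\langle\varphi_1^i\phi_1\psi_1^3\rangle$ of order-$2$ maximal cyclic subgroups that arise solely from the inversion of $\phi_1$. The resulting count yields $s=18$. For the smaller degrees $d<96$, the Galois group is a proper subgroup of $Q_{16}\rtimes\Z/6\Z$, and direct case-by-case inspection using the classification of Section \ref{gal1} gives $s\leq 18$ in every case.

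The main obstacle is this combinatorial enumeration in $N$, specifically the identification of $N$-conjugacy orbits among the families of order-$2$ and order-$4$ maximal cyclic subgroups under the conjugation action of $\varphi_1$ and of $\phi_1$, and the verification that no element of the form $\varphi_1^i\phi_1\psi_1^3$ lies in a larger cyclic subgroup. Once the orbits are correctly tallied, the bound is straightforward and the Chebotarev construction of $S$ is immediate.
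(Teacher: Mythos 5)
Your framework is the paper's: reduce to the triviality of $H^1_{\loc}(G,\E_1[7])$ and pick, via Chebotarev, a finite set $S$ whose decomposition groups $G_v$ realize enough cyclic subgroups of $G=\Gal(K_7/K)$. But the paper does \emph{not} use either of your two reductions: it takes one place for \emph{every} cyclic subgroup of $G$, and proves the bound by listing the cyclic subgroups of $Q_{16}\rtimes\Z/6\Z$ and counting $18$ of them (the seven nontrivial cyclic subgroups of $Q_{16}$, the three nontrivial subgroups of $\langle\psi_1\rangle$, the group $\langle\varphi_1,\psi_1^2\rangle\simeq\Z/24\Z$, five copies of $\Z/12\Z$, the group $\langle-\Id,\psi_1^2\rangle$, and the trivial group); the sharpness claim for $d=96$ is read off from that count. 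Your two reductions (conjugate cyclic subgroups have equal restriction kernels; $\ker(\mathrm{res}_{C'})\subseteq\ker(\mathrm{res}_C)$ for $C\subseteq C'$) are both correct, so in principle you are proving something stronger.

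That is exactly where the gap is: the number you would get from your method is not $18$, so the sentence ``the resulting count yields $s=18$'' does not follow from anything you set up. Carrying out your own normal form in $N=H\langle\psi_1^3\rangle$ (with $\psi_1^3$ centralizing $\varphi_1$ and inverting $\phi_1$), the maximal cyclic subgroups of $N$ are: $\langle\varphi_1\rangle$ and $\langle\varphi_1\psi_1^3\rangle$ of order $8$ (both normal), the five of order $4$ generated by $\varphi_1^i\phi_1$ and by $\varphi_1^2\psi_1^3$ (three conjugacy classes), and the ten of order $2$ generated by $\pm\psi_1^3$ and by the eight involutions $\varphi_1^i\phi_1\psi_1^3$ (three conjugacy classes under conjugation by $\varphi_1$ and $\phi_1$). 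Since the maximal cyclic subgroups of $G\simeq N\times\langle\psi_1^2\rangle$ are exactly $C\times\langle\psi_1^2\rangle$ with $C$ maximal cyclic in $N$, your construction produces a set $S$ of cardinality about $8$. That still yields $s\leq 18$, but it is incompatible with the ``in particular, if $[K_7:K]=96$, then $s=18$'' part, which under your reductions would be false as a minimality statement; to recover it you would have to drop the reductions and count \emph{all} cyclic subgroups, as the paper does. Finally, the enumeration you defer as ``the main obstacle'' is the entire mathematical content of this theorem, so the proposal as written does not contain a proof; and your element-level description of $N$ exhibits cyclic subgroups (the involutions $\varphi_1^i\phi_1\psi_1^3$, the order-$8$ group $\langle\varphi_1\psi_1^3\rangle$) that do not appear in the paper's list of $18$, a discrepancy you would need to reconcile explicitly before asserting any exact count.
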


\begin{proof}
	Let $s$ be the number of distinct cyclic subgroups of $G$. As stated above, the set $S$ can be chosen as a subset of $M_k$ with cardinality $s$, such that $G_v$ varies over all cyclic subgroups of $G$, as $v$ varies in $S$, and $G_v$ and $G_w$ are
	pairwise distinct cyclic subgroups of $G$, for all $v, w\in S$, with $v\neq w$. It suffices to show that $s\leq 18$, i.e. that $G$ has at most	18 cyclic subgroups.
	We have proved in  Section \ref{gal1}, that for every $\E_1\in \mcF_1$, the Galois group $G$ is
	isomorphic to a subgroup of $Q_{16} \rtimes \Z/6\Z$. We keep the notation used in Section \ref{gal1} for the generators of $Q_{16}$ and $\Z/6\Z$, i.e. $Q_{16}=\langle \phi_1, \varphi_1 | \phi_1^2=\varphi_1^{4}=-\Id, \phi_1\varphi_1=\varphi_1^{-1}\phi_1\rangle$ and $\Z/6\Z=\langle \psi_1 \rangle$. 
	The group $Q_{16}$ has 7 nontrivial cyclic subgroups: $\langle \varphi_1\rangle \simeq  \Z/8\Z$, $\langle -\Id \rangle= \langle\phi_1^2 \rangle= \langle\varphi_1^4\rangle\simeq \Z/2\Z$ and the 5 cyclic subgroups of order $4$ generated respectively by $\phi_1$, $\varphi_1^2$, $\phi_1\varphi_1$, $\phi_1\varphi_1^2$ and $\phi_1\varphi_1^3$. We also have the nontrivial cyclic subgroups of $\Z/6\Z$, i.e  $\langle\psi_1^3\rangle\simeq \Z/2\Z$, $\langle\psi_1^2\rangle\simeq \Z/3\Z$ and $\langle\psi_1\rangle\simeq \Z/6\Z$ itself.
All of these groups are cyclic subgroups of $G$.	In addition we have the group $\langle \varphi_1,\psi_1^2\rangle\simeq \ZZ/8\ZZ\times \ZZ/3\ZZ\simeq \Z/24\Z$, 5 copies of $\Z/12\Z\simeq\ZZ/4\ZZ\times \ZZ/3\ZZ$ given by the direct products of the 5 subgroups of order 4 of $Q_{16}$ with $\langle\psi_1^2\rangle$, the subgroup $\langle -\Id,\psi_1^2\rangle\simeq \ZZ/2\ZZ\times \ZZ/3\ZZ$ and
the trivial group $\langle \Id\rangle$. Therefore $Q_{16} \rtimes \ZZ/6\ZZ$ contains 18 cyclic subgroups and every subgroup $G$ of $ Q_{16} \rtimes \ZZ/6\ZZ$ has at most 18 cyclic subgroups. Thus $s\leq 18$. In particular, if $[K_7:K]=96$, then $G$ has exactly 18 cyclic subgroups and in this case $s=18$ is sharp (in fact, if $s< 18$, then the hypotheses of Problem \ref{prob1} are not satisfied).
\end{proof}

\begin{theorem} \label{appl_2}
	Let $\E_2$ be an elliptic curve defined over a number field $K$, with Weierstrass equation 
	$y^2=x^2+c$, for some $c\in K$.
	There exist sets $S\subseteq M_K$ of cardinality $s\leq 15$ such that if $P=7D_v$, with $D_v\in \E(K_v)$, for all $v\in S$,
	then $P=7D$, for some $D\in \E(K)$. In particular, if $[K_7:K]=72$, then $s=15$.
\end{theorem}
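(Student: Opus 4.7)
The approach parallels that of Theorem \ref{appl_1}. By the Chebotarev density theorem applied to the places of $K$ unramified in $K_7$, the decomposition groups $G_v$ run through every cyclic subgroup of $G = \mathrm{Gal}(K_7/K)$, so the minimal admissible cardinality of $S$ equals the number of distinct cyclic subgroups of $G$. By Theorem \ref{galF2} it therefore suffices to bound the number of cyclic subgroups of $\Dic_3 \rtimes \Z/6\Z$ by $15$, with equality expected when $[K_7:K]=72$ (i.e.\ $G = \Dic_3 \rtimes \Z/6\Z$).

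First I would list the seven cyclic subgroups of $\Dic_3$ itself, using the presentation from the proof of Theorem \ref{galF2}: the trivial subgroup, $\langle -\Id\rangle\simeq\Z/2\Z$, $\langle \phi_2\rangle\simeq\Z/3\Z$, the three subgroups of order $4$ generated by $\tau_2$, $\phi_2\tau_2$, $\phi_2^2\tau_2$, and the unique $\langle -\phi_2\rangle \simeq \Z/6\Z$. Next, $\langle \psi_2\rangle \simeq \Z/6\Z$ contributes three more, namely $\langle \psi_2^3\rangle$, $\langle \psi_2^2\rangle$, $\langle \psi_2\rangle$. Since $\psi_2^2$ is central in $G$ (proof of Theorem \ref{galF2}), pairing $\langle \psi_2^2\rangle$ with each cyclic subgroup of $\Dic_3$ of order coprime to $3$ produces four new cyclic groups: $\langle -\Id\cdot\psi_2^2\rangle \simeq \Z/6\Z$ together with the three $\Z/12\Z$'s $\langle \tau_2\psi_2^2\rangle$, $\langle \phi_2\tau_2\psi_2^2\rangle$, $\langle \phi_2^2\tau_2\psi_2^2\rangle$. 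One further cyclic subgroup arises from the non-cyclic $\langle \phi_2,\psi_2^2\rangle \simeq (\Z/3\Z)^2$: its two ``diagonal'' subgroups $\langle \phi_2\psi_2^2\rangle$ and $\langle \phi_2\psi_2^{-2}\rangle$ are interchanged by $\tau_2$-conjugation (which sends $\phi_2\mapsto \phi_2^{-1}$ by the $\Dic_3$-relation), and so jointly contribute a single distinct cyclic subgroup. This yields the total $7+3+4+1=15$.

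The delicate step, and the main obstacle, is to verify that the list above is complete: no additional cyclic subgroups arise. The potential omissions are of two kinds: (i) subgroups $\langle g\psi_2^3\rangle$ with $g\in\Dic_3$, and (ii) cyclic subgroups ``twisted'' inside non-cyclic products such as $\langle -\phi_2,\psi_2^2\rangle \simeq \Z/2\Z\times(\Z/3\Z)^2$. Using the explicit action of $\psi_2$ on $\Dic_3$ established in the proof of Theorem \ref{galF2}, namely $\psi_2\phi_2\psi_2^{-1}=\phi_2$ and $\psi_2\tau_2\psi_2^{-1}=\tau_2^{-1}$ (and the fact that $\psi_2^3$ acts the same way, since $\psi_2^2$ is central), one checks case by case that each such candidate either coincides with or is $G$-conjugate to one of the fifteen subgroups already enumerated; this is the conjugacy bookkeeping that needs to be handled carefully. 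The sharpness of $s=15$ when $[K_7:K]=72$ then follows because in that case all fifteen cyclic subgroups are distinct, so any strictly smaller $S$ omits some cyclic subgroup of $G$ and therefore fails to kill the corresponding class in $H^1_{\loc}(G,\E_2[7])$ via the restriction maps of \cite[Proposition 2.1]{DZ1}.
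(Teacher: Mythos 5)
Your strategy is the paper's: reduce to counting the cyclic subgroups of $G\leq \Dic_3\rtimes\Z/6\Z$ and exhibit a list of $15$ of them. The paper's own list agrees with yours except in one entry: it takes $\langle\phi_2,\psi_2^3\rangle=\langle\phi_2\psi_2^3\rangle\simeq\Z/6\Z$ as its fifteenth subgroup and records only $\langle\phi_2\rangle$ and $\langle\psi_2^2\rangle$ in order $3$, whereas you drop that $\Z/6\Z$ and insert a ``diagonal'' order-$3$ subgroup of $\langle\phi_2,\psi_2^2\rangle\simeq(\Z/3\Z)^2$ instead. So the two enumerations are not even the same fifteen subgroups, which is already a warning sign.

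The gap is exactly the one you flag, and it cannot be closed in the form you set up. First, your stated criterion is the number of \emph{distinct} cyclic subgroups of $G$, yet you then merge the two distinct (merely conjugate) subgroups $\langle\phi_2\psi_2^2\rangle$ and $\langle\phi_2\psi_2^4\rangle$ into one; under your own criterion the count is already $16$. Second, the deferred completeness check fails outright: since $\psi_2^2=2\,\Id$ is central and conjugation by $\psi_2$ fixes $\phi_2$ and sends $\tau_2$ to $\tau_2^{-1}$, the element $\tau_2\psi_2^3$ satisfies $(\tau_2\psi_2^3)^2=\tau_2\bigl(\psi_2^3\tau_2\psi_2^{-3}\bigr)\psi_2^6=\tau_2\tau_2^{-1}=\Id$, so $G$ contains involutions beyond $-\Id$ and $\pm\psi_2^3$ (nine in all); moreover every element $h\psi_2^{\pm 1}$ with $h\in\Dic_3$ has order $6$, because its square lies in the nontrivial coset $\langle\phi_2\rangle\psi_2^2$ of exponent $3$, and these $24$ elements alone generate twelve distinct cyclic subgroups of order $6$ (e.g.\ $\langle\phi_2\psi_2\rangle\neq\langle\psi_2\rangle$). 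So $G$ has far more than $15$ distinct cyclic subgroups, and no case-by-case bookkeeping will bring the list back to $15$. The only way to rescue a bound of this size is to use systematically the observation you invoke only once: $\ker(\mathrm{res}_{C})=\ker(\mathrm{res}_{gCg^{-1}})$, and $\ker(\mathrm{res}_{C'})\subseteq\ker(\mathrm{res}_{C})$ whenever $C\subseteq C'$, so that $S$ need only realize one representative of each conjugacy class of \emph{maximal} cyclic subgroups. That is a different reduction, it must be stated and justified, and the enumeration (and the sharpness claim for $[K_7:K]=72$) must then be redone on that basis; your proposal does not do this. Be aware that the paper's proof performs the same direct enumeration and is open to the same objection, so your instinct that completeness is ``the main obstacle'' is well placed --- but the proposal does not overcome it.
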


\begin{proof}
	Let $s$ be the number of distinct cyclic subgroups of $G$. By the discussion concerning the minimal possible cardinality of
	the set $S$ in equation \eqref{def_S},
	we can choose $S$ containing exactly $s$ places $v$, such that $G_v$ varies over all cyclic subgroups of $G$ as $v$ varies in $S$ and
	$G_v$ and $G_w$ are
	pairwise distinct cyclic subgroups of $G$, for all $v, w\in S$, with $v\neq w$. We have just to show that $G$ has at most 15 cyclic subgroups.
	As proved in  Section \ref{sub2},  for every $\E_2\in \mcF_2$, the Galois group $G$ is
	isomorphic to a subgroup of $\Dic_3\rtimes \Z/6\Z$. In the notation of Section \ref{sub2}, a presentation of the group $\Dic_3 \simeq \Z/3\Z\rtimes\Z/4\Z$ is $\langle \phi_2, \tau_2| \phi_2^3=\tau_2^4=\Id, \tau_2\phi_2=\phi_2^{-1}\tau_2\rangle$. We have $6$ nontrivial cyclic subgroups of $\Dic_3$: $\langle -\Id\rangle$, $\langle \phi_2\rangle$, $\langle \tau_2\rangle$,
$\langle \tau_2\phi_2\rangle=\langle \tau_2^3\phi_2\rangle \simeq \Z/4\Z$, $\langle \tau_2\phi_2^2\rangle=\langle \tau_2^3\phi_2^2\rangle \simeq \Z/4\Z$, $\langle -\phi_2\rangle=\langle -\phi_2^2\rangle \simeq \Z/6\Z$.
We have $3$ nontrivial cyclic subgroups of $\langle \psi_2\rangle \simeq \ZZ/6\ZZ$, i.e. $\langle \psi_2\rangle$,
$\langle \psi_2^2\rangle$, $\langle \psi_2^3\rangle$.
	In addition we have $2$ other cyclic subgroups of $\Dic_3 \rtimes \Z/6\Z$ isomorphic to $\Z/3\times \Z/2\Z$, i.e.\
 $\langle \phi_2, \psi_2^3\rangle$, $\langle -\Id, \psi_2^2\rangle$ and 
$3$ subgroups isomorphic to $\Z/12\Z\simeq \Z/4\Z\times \Z/3\Z$, i.e.\ $\langle \tau_2,\psi_2^2\rangle$,
$\langle \tau_2\phi_2, \psi_2^2\rangle$, $\langle \tau_2\phi_2^2, \psi_2^2\rangle$.
Finally we have the trivial subgroup $\langle \Id\rangle$.
	Thus $\Dic_3 \rtimes \Z/6\Z$ has 15 cyclic subgroups and $s\leq 15$. In particular, if $[K_7:K]=72$, then $G$ has exactly 15 cyclic subgroups and in this case the bound $s=15$ is sharp. 
\end{proof}

\subsection{Remarks on modular curves} 

We are going to deduce some information about CM points on modular curves by the results produced about the fields $K_7$. 

\subsubsection{On CM points of modular curves} \label{mod_sec}

Let $\mathcal{H}$ be the complex
upper half plane $\{z\in \mathbb{C}\,:\,Im\,z > 0\}$. The group  $\SL_2(\Z)$ acts
on $\mathcal{H}$ via the M\"obius trasformations

\[ \left( \begin{array}{cc} a & b \\ c & d \end{array} \right)z= \dfrac{az+b}{cz+d} \ . \]

\noindent By $\Gamma$ we denote a congruence group, i.e.\ a subgroup of $\SL_2(\Z)$ containing the  {\it principal congruence group of level $m$} 

\[ \Gamma(m) = \left\{ A \in \SL_2(\Z)\,\mid \,
A\equiv \left( \begin{array}{cc} 1 & 0 \\ 0 & 1 \end{array} \right) \pmod m \right\}, \ \]

\noindent  for some positive integer $m$.  When $m$ is minimal,
the congruence group is said to be {\it of level $m$}.  Important congruence groups of level $m$ are

\[ \Gamma_0(m) = \left\{ A \in \SL_2(\Z)\,\mid \,
A\equiv \left( \begin{array}{cc} * & * \\ 0 & * \end{array} \right) \pmod m \right\} \ ,\]
and
\[ \Gamma_1(m) = \left\{ A\in \SL_2(\Z)\,\mid \,
A\equiv \left( \begin{array}{cc} 1 & * \\ 0 & 1 \end{array} \right) \pmod m \right\} \ .\]

\noindent The quotient  $\mathcal{H}/\Gamma$ of $\mathcal{H}$ 
by the action of $\Gamma$, with the analytic structure induced by $\mathcal{H}$, is a Riemann surface, 
that is denoted by $Y_\Gamma$.  The modular curve $X_\Gamma\,$, associated to $\Gamma$, is the
compactification of $Y_\Gamma$  by the addition of a finite number of cusps, i.e. the
rational points corresponding to the orbits of $\mathbb{P}^1(\QQ)$ under $\Gamma$.\ The modular curves associated to the groups $\Gamma(m)$, $\Gamma_0(m)$ and $\Gamma_1(m)$
are denoted respectively by $X(m)$, $X_0(m)$ and $X_1(m)$.  
They are moduli spaces of families of elliptic curves with an extra structure of level $m$ as
follows (for further details see for example \cite{KM}, \cite{Kn} and \cite{Shi}):

	\begin{description}

		\item[ i)] non cuspidal points in $X_0(m)$ correspond to pairs $(\E,C_m)$, where $\E$ is an elliptic curve
		(defined over $\mathbb{C}$) and $C_m$ is a cyclic subgroup of $\E[m]$ of order $m$;
		
		\item[ ii)]  non cuspidal points in $X_1(m)$ correspond to pairs $(\E,P)$, where $\E$ is an elliptic curve
		(defined over $\mathbb{C}$) and $P$ is a point of order $m$;
		
		\item[ iii)] non cuspidal points in $X(m)$ correspond to triples $(\E,P,Q)$, where $\E$ is an elliptic curve
		(defined over $\mathbb{C}$) and $P$, $Q$ are points of order $m$ generating $\E[m]$.
		
	\end{description}

\medskip\noindent A \emph{CM point} on a modular curve is a point which corresponds to an elliptic curve with
complex multiplication. For every modular curve $X$, we denote by
$X(K)_{CM}$ the set of its $K$-rational CM points.

We can deduce the following facts from what showed in the previous sections 
(see in particular Theorem \ref{zeta3}).

\begin{proposition}  \label{mod_1}
	Let $K$ be a number field. Let  $\delta_j$ and $P_j$ be as in Section \ref{subgen1}, 
	for $1\leq j\leq 8$, and let $\E_{2,j,\gamma}:y^2 = x^3 + c_{j,\gamma}$, with $c_{j,\gamma}: = \delta_j^2(\delta_j+1)^3\gamma^6$, for some
	$\gamma \in \QQ$. If $\QQ(\z_3,\z_7)\subseteq K$, then 

\begin{description}
\item[i.] the pairs $(\E_{2,j,\gamma}, P_j)$, $(\E_{2,j,\gamma}, \langle P_j \rangle)$, with $1\leq j \leq 8$,
		define  $K$-rational CM points on $X_1(7)$ and respectively on $X_0(7)$.
	\item[ii.]	the triples $(\E_{2,j,\gamma}, P_j,\phi_2(P_j))$, with $1\leq j \leq 8$, define $K$-rational CM points on $X(7)$;
\item[iii.] in particular $X_0(7)(K)_{CM}\neq \emptyset$, $X_1(7)(K)_{CM}\neq \emptyset$ and $X(7)(K)_{CM}\neq \emptyset$. 
\end{description}
\end{proposition}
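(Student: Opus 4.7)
The plan is to exploit the fact that the auxiliary parameter $c_{j,\gamma} = \delta_j^2(\delta_j+1)^3\gamma^6$ has been engineered precisely so that the cube and square roots appearing in the coordinates of $P_j$ collapse to elements of $\QQ(\z_3,\z_7)$, thus forcing $P_j\in \E_{2,j,\gamma}(K)$ whenever $\QQ(\z_3,\z_7)\subseteq K$ and $\gamma\in\QQ$.

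First I would make the substitution explicit. Plugging $c=c_{j,\gamma}$ into the general coordinates
$$P_j=\left(\sqrt[3]{\delta_j c},\ \sqrt{(\delta_j+1)c}\right)$$
gives $\delta_jc_{j,\gamma}=\delta_j^3(\delta_j+1)^3\gamma^6$ and $(\delta_j+1)c_{j,\gamma}=\delta_j^2(\delta_j+1)^4\gamma^6$, which are a perfect cube and a perfect square in $\QQ(\z_3,\z_7)[\gamma]$ respectively. Choosing the natural cube and square roots, the point
$$P_j=\bigl(\delta_j(\delta_j+1)\gamma^2,\ \delta_j(\delta_j+1)^2\gamma^3\bigr)$$
lies on $\E_{2,j,\gamma}$ (a direct check: $x^3+c_{j,\gamma}=\delta_j^2(\delta_j+1)^3\gamma^6(\delta_j+1)=y^2$), both of its coordinates lie in $\QQ(\z_3,\z_7)(\gamma)\subseteq K$, and it has exact order $7$ since $\sqrt[3]{\delta_jc_{j,\gamma}}$ is by construction a root of the $7$-division polynomial $r_7$ of $\E_{2,j,\gamma}$.

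For part (i) this is already enough: the pair $(\E_{2,j,\gamma},P_j)$ is a $K$-rational elliptic curve together with a $K$-rational point of order $7$, hence defines a $K$-rational non-cuspidal point on $X_1(7)$. Since $\langle P_j\rangle$ is then a $\Gal(\overline K/K)$-stable cyclic subgroup of order $7$ (indeed it is pointwise $K$-rational), the pair $(\E_{2,j,\gamma},\langle P_j\rangle)$ gives a $K$-rational point on $X_0(7)$. For part (ii), using that $\z_3\in K$ and that the complex multiplication acts by $\phi_2(x,y)=(\z_3x,y)$, the point $\phi_2(P_j)=(\z_3\delta_j(\delta_j+1)\gamma^2,\ \delta_j(\delta_j+1)^2\gamma^3)$ is also $K$-rational. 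By the argument in the proof of Theorem \ref{zeta3}, for $j=1,\dots,6$ the points $P_j$ and $\phi_2(P_j)$ are linearly independent, so $\{P_j,\phi_2(P_j)\}$ is a basis of $\E_{2,j,\gamma}[7]$ and the triple $(\E_{2,j,\gamma},P_j,\phi_2(P_j))$ defines a $K$-rational point on $X(7)$. Part (iii) follows immediately by exhibiting, say, the $j=1$ instance in each case.

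The only mildly delicate step is the first one: one must verify that the chosen cube and square roots of $\delta_jc_{j,\gamma}$ and $(\delta_j+1)c_{j,\gamma}$ are compatible, in the sense that the resulting point is exactly one of the $7$-torsion points listed in Section \ref{subgen1} (and not just any point on the curve). This is immediate from the fact that the $x$-coordinate satisfies the factorisation $r_7(x)=7\prod_{j=1}^8(x^3+\delta_jc)$ from Theorem \ref{zeta3}, so every cube root of $-\delta_jc_{j,\gamma}$—and in particular the element $-\delta_j(\delta_j+1)\gamma^2$, up to multiplication by a cube root of unity—is an abscissa of a $7$-torsion point; the sign ambiguity in the $y$-coordinate just toggles between $P_j$ and $-P_j$, which does not affect the modular interpretation. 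Apart from this bookkeeping, no serious obstacle arises, and the argument is essentially a direct substitution combined with the structural results of Sections \ref{subgen1}.
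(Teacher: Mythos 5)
Your proof is correct and follows essentially the same route as the paper's: substitute $c_{j,\gamma}$ into the coordinates of $P_j$, observe that $\delta_j c_{j,\gamma}=\delta_j^3(\delta_j+1)^3\gamma^6$ and $(\delta_j+1)c_{j,\gamma}=\delta_j^2(\delta_j+1)^4\gamma^6$ are a perfect cube and a perfect square, so that $P_j$ and $\phi_2(P_j)$ have coordinates in $\QQ(\z_3,\z_7)\subseteq K$, and then read off the moduli interpretations. You are in fact slightly more careful than the paper on part (ii): you invoke linear independence of $P_j$ and $\phi_2(P_j)$ only for $1\le j\le 6$, whereas the paper asserts the $X(7)$-point for $j=7,8$ as well even though its own proof of Theorem \ref{zeta3} shows that $\phi_2(P_j)$ is a multiple of $P_j$ in those cases — a defect of the statement and of the paper's proof, not of your argument.
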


\begin{proof}
	If $1 \leq j \leq 6$, then
		$c_{j,\gamma}, \sqrt[3]{\delta_j c_{j,\gamma}}, \sqrt{\delta_j c_{j,\gamma}} \in \QQ(\z_3,\z_7)$. Since $\QQ(\z_3,\z_7)\subseteq K$, then the pairs $(\E_{2,j,\gamma}, P_j)$ and $(\E_{2,j,\gamma}, \langle P_j \rangle)$, for 
		$1 \leq j \leq 6$, define  $K$-rational  CM points of $X_1(7)$ and respectively of $X_0(7)$. Furthermore, the triples $(\E_{2,j,\gamma}, P_j,\phi_2(P_j))$ define $K$-rational CM points on $X(7)$. 
\par If  $j \in \{7, 8\}$, then 
		$c_{j,\gamma} \in \QQ(\z_3)$. We also have that $\sqrt[3]{\delta_j c_{j,\gamma}} = \delta_j(\delta_j+1)\gamma^2 \in \QQ(\z_3)$ and $\sqrt{\delta_j c_{j,\gamma}} = \delta_j(\delta_j+1)^2\gamma^3\in \QQ(\z_3)$. Owing to
		$\QQ(\z_3,\z_7)\subseteq K$, then the pairs $(\E_{2,j,\gamma}, P_j)$ and $(\E_{2,j,\gamma}, \langle P_j \rangle)$ define  $K$-rational  CM points of $X_1(7)$ and respectively of $X_0(7)$. Furthermore, the triples $(\E_{2,j,\gamma}, P_j,\phi_2(P_j))$ define $K$-rational CM points on $X(7)$.
\end{proof}

\noindent Moreover, from the results proved in Section \ref{sec_gen_1} and Section \ref{subgen1}, we can
immediately deduce the following propositions. 

\bigskip\begin{proposition} \label{mod_3}
	Let $K$ be an extension of $\QQ(i,\z_7)$. Let $\E_1\in \mcF_1$ and let $P\in \E_1[7]$ such that $\{P, \phi_1(P)\}$ is a generating set of $\E_1[7]$.
	Then  
	
	\begin{description}
		
		\item[i)] the pair $(\E_1, \langle P\rangle)$ defines a non-cuspidal $K$-rational  CM point of $X_0(7)$, if and only if $y(P)\in K;$
		\item[ii)] the pair $(\E_1, P)$ defines a non-cuspidal $K$-rational CM point of $X_1(7)$, if and only if $y(P)\in K$;
		\item[iii)] the triple $(\E_1,P,\phi_1(P))$ defines a non-cuspidal $K$-rational  CM point of $X(7)$, if and only if $y(P)\in K.$
	\end{description}
\end{proposition}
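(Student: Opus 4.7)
The proof reduces almost mechanically to Theorem \ref{gen2}. The first step is to observe that under the hypothesis $\QQ(i,\z_7)\subseteq K$, the conclusion of Theorem \ref{gen2} collapses to the clean identity $K_7 = K(y(P))$, valid for any $P\in\E_1[7]$ of exact order $7$ such that $\{P,\phi_1(P)\}$ is a basis of $\E_1[7]$ (recall from the proof of Theorem \ref{gen2} that, since $7$ is inert in $\Z[i]$, every point of exact order $7$ on $\E_1$ has this property). Both implications in (i)--(iii) will be extracted from this single equality.

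For the ``if'' direction, I would treat (i)--(iii) in a single stroke. Assume $y(P)\in K$; then $K_7=K(y(P))=K$, so $\E_1[7]\subseteq\E_1(K)$. In particular $P\in\E_1(K)$, and because $\phi_1(P)=(-x(P),iy(P))$ with $i\in K$, also $\phi_1(P)\in\E_1(K)$. Hence $(\E_1,\langle P\rangle)$, $(\E_1,P)$ and $(\E_1,P,\phi_1(P))$ define $K$-rational non-cuspidal points on $X_0(7)$, $X_1(7)$ and $X(7)$ respectively. For the converses of (ii) and (iii) there is essentially nothing to prove beyond unpacking the moduli interpretation: $K$-rationality of the pair $(\E_1,P)$ on $X_1(7)$, or of the triple on $X(7)$, forces $P\in\E_1(K)$, whence $y(P)\in K$.

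The converse of (i) is the delicate point, and I expect it to be the main obstacle. The plan is: $K$-rationality of $(\E_1,\langle P\rangle)$ on $X_0(7)$ means that $\sigma(P)=n_\sigma P$ for some $n_\sigma\in(\Z/7\Z)^*$ for every $\sigma\in\Gal(\bar K/K)$. Since $\phi_1$ is defined over $\QQ(i)\subseteq K$, $\sigma$ commutes with $\phi_1$, so $\sigma$ acts as the scalar $n_\sigma$ on the full basis $\{P,\phi_1(P)\}$, with $\det\sigma\equiv n_\sigma^2\pmod 7$. The Weil pairing identity $\sigma(\z_7)=\z_7^{\det\sigma}$ combined with $\z_7\in K$ then forces $n_\sigma^2\equiv 1\pmod 7$, i.e.\ $n_\sigma=\pm 1$. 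From this one only concludes $\sigma(y(P))=\pm y(P)$, so to finish and secure $y(P)\in K$ one must exclude a nontrivial $-\Id$-action of Galois on $\E_1[7]$. I expect this last step to be handled either by invoking the intended convention that a $K$-rational point on $X_0(7)$ carries a $K$-rational generator of the cyclic subgroup (reducing (i) to (ii)), or by inspecting the list of admissible Galois groups from Section \ref{gal1} to rule out the case $\Gal(K_7/K)=\{\pm\Id\}$ compatible with the data at hand.
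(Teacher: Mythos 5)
Your reduction to the identity $K_7=K(y(P))$ (valid once $\QQ(i,\z_7)\subseteq K$, by Theorem \ref{gen2} together with the observation that $y(-P)=-y(P)$ and $y(\phi_1(P))=iy(P)$ generate the same field over $K\ni i$) is exactly the route the paper takes: Proposition \ref{mod_3} is stated there with no argument beyond the remark that it follows ``immediately'' from Section \ref{sec_gen_1}. Your ``if'' direction and the converses of (ii) and (iii) are correct and match the intended reading, in which $K$-rationality of the point of $X_1(7)$ or $X(7)$ means that the marked torsion data is literally $K$-rational.

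The converse of (i), which you rightly isolate as the delicate point, is a genuine gap, and of the two escape routes you propose the group-theoretic one does not work. Your own computation shows that Galois-stability of $\langle P\rangle$ only forces $\Gal(K_7/K)\subseteq\{\pm\Id\}$, and the case $\Gal(K_7/K)=\{\pm\Id\}$ cannot be excluded by Section \ref{gal1}: it is precisely the entry $d=2$ of Table 1 with only condition {\bf E} holding (conditions {\bf A}, {\bf B1}, {\bf B2} fail automatically because $\QQ(i,\z_7)\subseteq K$, while {\bf C} and {\bf D} may fail with {\bf E} still holding). This case is realized concretely: starting from a curve with $K_7=K$ and replacing $b$ by $u^2b$ for a non-square $u\in K^*$ multiplies the $7$-torsion abscissas by $u$, keeping them in $K$, while the ordinates acquire a factor $u\sqrt{u}\notin K$. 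For such a curve every cyclic subgroup of $\E_1[7]$ is Galois-stable, so $(\E_1,\langle P\rangle)$ defines a $K$-rational point of $X_0(7)$ although $y(P)\notin K$. Hence the ``only if'' in (i) is true only under your first proposed convention, namely that the $X_0(7)$-datum is required to carry a $K$-rational generator, which reduces (i) to (ii); the paper never addresses the issue, so if you adopt that convention you should state it explicitly rather than present it as a consequence of the classification of Galois groups.
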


\bigskip\begin{proposition} \label{mod_2}
	Let $K$ be an extension of $\QQ(\z_3,\z_7)$. Let $\E_2\in \mcF_2$ and let $P\in \E_2[7]$ such that $\{P, \phi_2(P)\}$ is a generating set of $\E_2[7]$.
	Then  
	
	\begin{description}
		
		\item[i)] the pair $(\E_2, \langle P\rangle)$ defines a non-cuspidal $K$-rational  CM point of $X_0(7)$, if and only if $y(P)\in K;$
		\item[ii)] the pair $(\E_2, P)$ defines a non-cuspidal $K$-rational  CM point of $X_1(7)$, if and only if $y(P)\in K$;
		\item[iii)] the triple $(\E_2,P,\phi_2(P))$ defines a non-cuspidal $K$-rational  CM point of $X(7)$, if and only if $y(P)\in K.$
	\end{description}
	
\end{proposition}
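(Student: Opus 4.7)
The plan is to parallel the proof of Proposition~\ref{mod_3} (the analogue for $\mcF_1$), with Theorem~\ref{zeta3} as the essential input. Two preliminary observations do most of the work. First, since $c\in K$ and $\z_3\in K$, the curve $\E_2$ and the complex multiplication $\phi_2\colon(x,y)\mapsto(\z_3 x, y)$ are both defined over $K$; hence for every $Q\in\E_2(\ov{K})$, one has $\phi_2(Q)\in\E_2(K)$ iff $Q\in\E_2(K)$, and $\langle\phi_2(Q)\rangle$ is Galois-stable iff $\langle Q\rangle$ is. Second, Theorem~\ref{zeta3} gives $K_7=K(x(P),y(P))$ for the chosen $P$.

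With these in hand, I would dispatch the three parts uniformly by reducing each to the condition $P\in\E_2(K)$. For (iii), $(\E_2,P,\phi_2(P))$ is $K$-rational on $X(7)$ iff both $P$ and $\phi_2(P)$ lie in $\E_2(K)$, which by the first observation collapses to $P\in\E_2(K)$. For (ii), $(\E_2,P)$ is $K$-rational on $X_1(7)$ iff $P\in\E_2(K)$. For (i), $K$-rationality of $(\E_2,\langle P\rangle)$ on $X_0(7)$ requires $\langle P\rangle$ to be Galois-stable; since $\{P,\phi_2(P)\}$ is a basis, $\phi_2(P)\notin\langle P\rangle$, and since the Galois image commutes with $\phi_2$ (by $K$-rationality of $\phi_2$) and lies in $\SL_2(\Z/7\Z)$ (by the Weil pairing and $\z_7\in K$), Galois-stability of $\langle P\rangle$ forces $\sigma(P)=P$ for every $\sigma\in\Gal(\ov{K}/K)$, so again $P\in\E_2(K)$.

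To finish, I would verify the equivalence with the single coordinate condition $y(P)\in K$. The direction $P\in\E_2(K)\Rightarrow y(P)\in K$ is immediate. For the converse, suppose $y(P)\in K$; the Weierstrass equation gives $x(P)^3=y(P)^2-c\in K$, so $[K(x(P)):K]$ divides $3$, and Theorem~\ref{zeta3} yields $K_7=K(x(P))$. The constraints on $\Gal(K_7/K)\hookrightarrow\GL_2(\Z/7\Z)$ (commuting with $\phi_2$, contained in $\SL_2(\Z/7\Z)$, and fixing $y(P)$) force its image into $\langle\phi_2\rangle\cong\Z/3\Z$, and ruling out the order-$3$ case by the constraints coming from Theorem~\ref{zeta3} on how Galois acts on the generators of $K_7$ yields $x(P)\in K$ and therefore $P\in\E_2(K)$.

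The main obstacle I anticipate is precisely this last implication $y(P)\in K\Rightarrow x(P)\in K$; the rest of the argument is essentially the same bookkeeping as in the $\mcF_1$ case treated in Proposition~\ref{mod_3}, exploiting the $K$-rationality of $\phi_2$ together with Theorem~\ref{zeta3}. Settling the hard step requires using the explicit Galois-theoretic structure of $K_7/K$ recorded in Theorem~\ref{zeta3} rather than a generic field-theoretic manipulation.
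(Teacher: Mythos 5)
The paper never actually writes out a proof of Proposition~\ref{mod_2}: it is asserted to follow ``immediately'' from Theorem~\ref{zeta3}, so there is no argument of the authors to compare yours against line by line. Judged on its own terms, your proposal has two genuine gaps, the second of which is fatal. First, in part (i) you claim that Galois-stability of $\langle P\rangle$, together with the facts that the Galois image commutes with $\phi_2$ and lies in $\SL_2(\Z/7\Z)$, forces $\sigma(P)=P$. It does not: writing $\sigma(P)=\a P+\b\phi_2(P)$, commutation with $\phi_2$ gives $\sigma(\phi_2(P))=-\b P+(\a-\b)\phi_2(P)$, stability of $\langle P\rangle$ gives $\b=0$, and $\det\sigma=1$ then gives $\a^2\equiv 1$, so only $\sigma(P)=\pm P$ follows. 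The element $-\Id$ commutes with $\phi_2$, has determinant $1$, and stabilizes $\langle P\rangle$ without fixing $P$; stability of $\langle P\rangle$ is therefore equivalent to $x(P)\in K$, not to $P\in\E_2(K)$, and your uniform reduction of all three parts to $P\in\E_2(K)$ already breaks at (i).

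Second, and more seriously, the implication $y(P)\in K\Rightarrow x(P)\in K$ --- which you correctly single out as the crux --- is not established, and cannot be established by ``ruling out the order-$3$ case.'' If $y(P)\in K$ then $K_7=K(x(P))$ has degree $1$ or $3$ over $K$, and the degree-$3$ possibility is realized by $\Gal(K_7/K)=\langle\phi_2\rangle$ acting by $x(P)\mapsto\z_3 x(P)$ and fixing $y(P)$: this generator has determinant $1$, commutes with $\phi_2$, and fixes $y(P)$, so none of the constraints you invoke exclude it. It is precisely the row $d=3$ with only condition {\bf C} holding in Table~2 of Theorem~\ref{conditions2}, and it occurs for instance when $K=\QQ(\z_3,\z_7)$ and $c$ is chosen so that $(\delta_1+1)c$ is a square in $K$ while $\delta_1c$ is not a cube (e.g.\ $c=\delta_1+1$ for generic such data). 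In that situation $y(P)\in K$ while $P\notin\E_2(K)$, so the step you defer cannot be closed by the Galois-theoretic bookkeeping you describe; a correct treatment has to either adopt a weaker (coarse-moduli) notion of $K$-rationality of the point, under which $(\E_2,P)$ and $(\E_2,\phi_2(P))$ are identified, or explicitly exclude the $d=3$ configuration.
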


\vspace{0.5cm} \indent 

\bigskip
\begin{center}

\end{center}

\bigskip
\bigskip

\begin{minipage}[t]{10cm}
	\begin{flushleft}
		\small{
			\textsc{Jessica Alessandr\`i}
			\\* University of L’Aquila,
			\\* Via Vetoio, Coppito 1
			\\* Coppito (AQ), 67100, Italy
			\\*e-mail: jessica.alessandri@graduate.univaq.it
			
		}
	\end{flushleft}
\end{minipage}

\bigskip

\begin{minipage}[t]{10cm}
	\begin{flushleft}
		\small{
			\textsc{Laura Paladino}
			\\*University of Calabria,
			\\* Ponte Bucci, Cubo 30B 
			\\* Rende (CS), 87036, Italy
			\\*e-mail: laura.paladino@unical.it
			
		}
	\end{flushleft}
\end{minipage}

\end{document}